
\documentclass[a4paper,11pt]{scrartcl}

\usepackage[T1]{fontenc}
\usepackage[latin1]{inputenc}
\usepackage[english]{babel}
\usepackage{setspace}
\usepackage{amsmath,amssymb,amsthm}
\usepackage{accents}
\usepackage{mathtools}
\usepackage{dsfont}
\usepackage{url}
\usepackage[section]{algorithm}
\usepackage{algpseudocode}
\usepackage{relsize}
\usepackage{array}
\usepackage{dcolumn}
\usepackage{sidecap}
\usepackage{draftwatermark}
\usepackage{sidecap}

\SetWatermarkText{PREPRINT}
\SetWatermarkLightness{0.85}
\SetWatermarkScale{4}

\setkomafont{title}{\normalfont\bfseries}
\setkomafont{section}{\normalfont\bfseries}
\setkomafont{subsection}{\normalfont\itshape}

\theoremstyle{plain}
\newtheorem{theorem}{Theorem}[section]
\newtheorem{lemma}[theorem]{Lemma}
\newtheorem{remark}[theorem]{Remark}
\newtheorem{definition}[theorem]{Definition}
\newtheorem{corollary}[theorem]{Corollary}

\title{Error control for the localized reduced basis multi-scale method with adaptive on-line enrichment}
\author{M. Ohlberger\thanks{Mario Ohlberger, Applied Mathematics, University of M\"unster, Einsteinstr. 62, D-48149 M\"unster, Germany, \texttt{mario.ohlberger@uni-muenster.de}} \and F. Schindler\thanks{Felix Schindler (formerly Albrecht), Applied Mathematics, University of M\"unster, Einsteinstr. 62, D-48149 M\"unster, Germany, \texttt{felix.schindler@uni-muenster.de}. This work has been supported by the German Federal Ministry of Education and Research (BMBF) under contract number 05M13PMA.}}

\newcommand{\hnS}{\hspace{-1.25pt}}
\newcommand{\divergence}{\nabla\hnS\hnS\cdot\hnS}
\newcommand{\gradient}{\nabla\hnS}
\newcommand{\gradienth}{\nabla\hnS_h}
\newcommand{\boundary}[1]{\ensuremath{\partial\hnS #1}}
\newcommand{\restrict}[2]{{\ensuremath{\left. #1\right|_{#2}}}}
\newcommand{\restrictInline}[2]{{\ensuremath{#1\big|_{#2}}}}
\newcommand{\komma}{\text{,}}
\newcommand{\punkt}{\text{.}}
\newcommand{\mydot}{\hnS\cdot\hnS}
\newcommand{\N}{\mathbb{N}}
\newcommand{\R}{\mathbb{R}}
\newcommand{\Order}{\mathcal{O}}
\newcommand{\Pk}{\mathbb{P}}
\newcommand{\Hdiv}{H_\text{div}}
\newcommand{\param}{\boldsymbol{\mu}}
\newcommand{\Params}{\mathcal{P}}
\newcommand{\paramFixed}{\overline{\param}}
\newcommand{\paramHat}{\hat{\param}}
\newcommand{\eps}{\varepsilon}
\newcommand{\alphaParam}{\alpha(\param, \paramFixed)}
\newcommand{\alphaParamHat}{\alpha(\param, \paramHat)}
\newcommand{\gammaParam}{\gamma(\param, \paramFixed)}
\newcommand{\gammaParamHat}{\gamma(\param, \paramHat)}
\newcommand{\red}{\ensuremath{\text{\textnormal{red}}}}
\newcommand{\overcirc}[1]{\accentset{\circ}{#1}}
\newcommand{\Triangulation}{\mathcal{T}_H}
\newcommand{\triangulation}{\tau_h}
\newcommand{\tildetriangulation}{\tilde{\tau}_h}
\newcommand{\tildeTriangulation}{\tilde{\mathcal{T}}_H}
\newcommand{\Element}{T}
\newcommand{\element}{t}
\newcommand{\vertex}{\nu}
\newcommand{\neighbor}{s}
\newcommand{\Neighbor}{S}
\newcommand{\Neighbors}{\mathcal{N}}
\newcommand{\Faces}{\mathcal{F}_H}
\newcommand{\faces}{\mathcal{F}_h}
\newcommand{\tildefaces}{\tilde{\mathcal{F}}_h}
\newcommand{\innerfaces}[1]{\overcirc{\mathcal{F}}_#1}
\newcommand{\boundaryfaces}[1]{\overline{\mathcal{F}}_#1}
\newcommand{\face}{e}
\newcommand{\Face}{E}
\newcommand{\mean}[1]{\ensuremath{\left\{\hnS\hnS\left\{#1\right\}\hnS\hnS\right\}}}
\newcommand{\jump}[1]{\ensuremath{\left[\hnS\left[#1\right]\hnS\right]}}
\newcommand{\energynorm}[2]{\ensuremath{{\left|\hnS\left|\hnS\left|#1\right|\hnS\right|\hnS\right|}_{#2}}}
\newcommand{\jumpnorm}[2]{\ensuremath{{\left[\hnS\left[\hnS\left[#1\right]\hnS\right]\hnS\right]}_{#2}}}
\newcommand{\norm}[2]{\ensuremath{{\left|\hnS\left|#1\right|\hnS\right|}_{#2}}}
\newcommand{\one}{\mathds{1}}
\DeclareMathOperator{\dx}{dx}
\DeclareMathOperator{\ds}{ds}

\newcommand{\dune}[1]{\texttt{dune-#1}}
\newcommand\Cpp{C\nolinebreak[4]\hspace{-.05em}\raisebox{.4ex}{\relsize{-3}{\textbf{++}}}}
\newcolumntype{s}{D{,}{\cdot}{-1}}
\newcommand{\sci}[2]{#1,10^{#2}}

\newcommand{\mcol}[2]{\multicolumn{1}{#1}{$#2$}}

\definecolor{fiveclassYlGnBu1}{rgb}{1.00,1.00,0.80}
\definecolor{fiveclassYlGnBu2}{rgb}{0.63,0.85,0.71}
\definecolor{fiveclassYlGnBu3}{rgb}{0.25,0.71,0.77}
\definecolor{fiveclassYlGnBu4}{rgb}{0.17,0.50,0.72}
\definecolor{fiveclassYlGnBu5}{rgb}{0.15,0.20,0.58}

\definecolor{fiveclassRdYlBu1}{rgb}{0.84,0.10,0.11}
\definecolor{fiveclassRdYlBu2}{rgb}{0.99,0.68,0.38}
\definecolor{fiveclassRdYlBu3}{rgb}{1.00,1.00,0.75}
\definecolor{fiveclassRdYlBu4}{rgb}{0.67,0.85,0.91}
\definecolor{fiveclassRdYlBu5}{rgb}{0.17,0.48,0.71}

\definecolor{sixclassRdYlBu1}{rgb}{0.84,0.19,0.15}
\definecolor{sixclassRdYlBu2}{rgb}{0.99,0.55,0.35}
\definecolor{sixclassRdYlBu3}{rgb}{1.0,0.88,0.56}
\definecolor{sixclassRdYlBu4}{rgb}{0.88,0.95,0.97}
\definecolor{sixclassRdYlBu5}{rgb}{0.57,0.75,0.86}
\definecolor{sixclassRdYlBu6}{rgb}{0.27,0.46,0.71}

\begin{document}

\maketitle

\begin{small}
  \textbf{Abstract.}
  In this contribution we consider localized, robust and efficient a-posteriori error estimation of the \underline{l}ocalized \underline{r}educed \underline{b}asis \underline{m}ulti-\underline{s}cale (LRBMS) method for parametric elliptic problems with possibly heterogeneous diffusion coefficient.
The numerical treatment of such parametric multi-scale problems are characterized by a high computational complexity, arising from the multi-scale character of the underlying differential equation and the additional parameter dependence.
The LRBMS method can be seen as a combination of numerical multi-scale methods and model reduction using \underline{r}educed \underline{b}asis (RB) methods to efficiently reduce the computational complexity with respect to the multi-scale as well as the parametric aspect of the problem, simultaneously.
In contrast to the classical residual based error estimators currently used in RB methods, we are considering error estimators that are based on conservative flux reconstruction and provide an efficient and rigorous bound on the full error with respect to the weak solution.
In addition, the resulting error estimator is localized and can thus be used in the on-line phase to adaptively enrich the solution space locally where needed.
The resulting certified LRBMS method with adaptive on-line enrichment thus guarantees the quality of the reduced solution during the on-line phase, given any (possibly insufficient) reduced basis that was generated during the offline phase.
Numerical experiments are given to demonstrate the applicability of the resulting algorithm with online enrichment to single phase flow in heterogeneous media.

\end{small}

\vspace{1ex}

\textbf{Key words.} model reduction, multi-scale, error analysis, flux reconstruction.

\vspace{1ex}

\textbf{AMS subject classifications.} 65G99, 65N55, 65N15, 35J20, 65N30, 76S05.

\section{Introduction}

We are interested in efficient and reliable numerical approximations of elliptic parametric multi-scale problems.
Such problems consist of finding $p(\param) \in Q$, such that
\begin{align}
  b(p(\param), q; \param) = l(q)
    &&\text{for all } q \in Q\komma
  \label{equation::introduction::general_variational_setting}
\end{align}
in a suitable space $Q$ for parameters $\param\in\Params \subset \R^p$, for $p \in \N$, where the data functions involved may depend on an a-priori given multi-scale parameter $\eps > 0$ (described in detail in Section \ref{section::problem_formulation}).
An approximation $p_h(\param) \in Q_h$ of $p(\param)$ is usually obtained by a discretization of \eqref{equation::introduction::general_variational_setting} resulting from a Galerkin projection onto a discrete space $Q_h$, associated with a triangulation $\triangulation$ of $\Omega$.
Since $\triangulation$ should resolve all features of \eqref{equation::introduction::general_variational_setting} associated with the fine scale $\eps$, solving parametric heterogeneous multi-scale problems accurately can be challenging and computationally costly, in particular for strongly varying scales and parameter ranges (see the references in \cite{Ohl2012}).
Two traditional approaches exist to reduce the computational complexity of the discrete problem: numerical multi-scale methods and model order reduction techniques.
Numerical multi-scale methods reduce the complexity of multi-scale problems with respect to $\eps$ for a fixed $\param$, while model order reduction techniques reduce the complexity of parametric problems with respect to $\param$ for moderate scales $\eps$ (see \cite{Ohl2012} for an overview).
In general, numerical multi-scale methods capture the macroscopic behavior of the solution in a coarse approximation space $Q_H \subset Q_h$, e.g., associated with a coarse triangulation $\Triangulation$ of $\Omega$, and recover the microscopic behavior of the solution by local fine-scale corrections.
Inserting this additive decomposition into \eqref{equation::introduction::general_variational_setting} yields a coupled system of a fine- and a coarse-scale variational problem.
By appropriately selecting trial and test spaces and defining the localization operators to decouple this system, a variety of numerical multi-scale methods can be recovered, e.g., the \underline{m}ulti-\underline{s}cale \underline{f}inite \underline{e}lement \underline{m}ethod (MsFEM) \cite{EH2009}, the variational multi-scale method \cite{Nor2008,LM2007}, the multi-scale finite volume method \cite{HJ2011} and the \underline{h}eterogeneous \underline{m}ulti-scale \underline{m}ethod (HMM) \cite{AEEV2012,HO2009}, just to name a few.
Model order reduction using \underline{r}educed \underline{b}asis (RB) methods, on the other hand, is based on the idea to introduce a reduced space $Q_\red \subset Q_h$, spanned by discrete solutions for a limited number of parameters $\param$.
These training parameters are iteratively selected by an adaptive Greedy procedure (see e.g. \cite{VPP2003} and the references therein).
Depending on the choice of the training parameters and the nature of the problem, $Q_\red$ is expected to be of a significantly smaller dimension than $Q_h$.
Additionally, if $b$ allows for an affine decomposition with respect to $\param$, its components can be projected onto $Q_\red$, which can then be used to effectively split the computation into an off-line and on-line part.
In the off-line phase all parameter-independent quantities are precomputed, such that the on-line phase's complexity only depends on the dimension of $Q_\red$.
The idea of the recently presented \underline{l}ocalized \underline{r}educed \underline{b}asis \underline{m}ulti-\underline{s}cale (LRBMS) approach (see \cite{KOH2011,AHKO2012}) is to combine numerical multi-scale and RB methods and to generate a local reduced space $Q_\red^\Element \subset Q_h^\Element$ for each coarse element of $\Triangulation$, given a tensor product type decomposition of the fine approximation space, $Q_h = \oplus_{\Element\in\Triangulation} Q_h^\Element$.
The coarse reduced space is then given as $Q_\red(\Triangulation) := \oplus_{\Element\in\Triangulation} Q_\red^\Element \subset Q_h$, resulting in a multiplicative decomposition of the solution into $p_\red(x; \param)=\sum_{n=1}^{\dim Q_\red(\Triangulation)} p_n(x; \param) \varphi_n(x)$, where the RB functions $\varphi_n$ capture the microscopic behavior of the solution associated with the fine scale $\eps$ and the coefficient functions $p_n$ only vary on the coarse partition $\Triangulation$.

Other model reduction approaches for parametric (but not multi-scale) problems that incorporate localization ideas and concepts from \underline{d}omain \underline{d}ecomposition (DD) methods are the reduced basis element method \cite{MR2002,LMR2007}, the reduced basis hybrid method \cite{IQR2012,IQRV2014} and the port reduced static condensation reduced basis element method \cite{Sme2015}.
While the idea of the former is to share one reduced basis on all subdomains the idea of the latter two is to generate one reduced basis for each class of subdomains which are then coupled appropriately.
There also exist several approaches to use model reduction techniques for homogenization problems (see \cite{Boy2008}) and problems with multiple scales, such as the reduced basis finite element HMM \cite{AB2013,AB2014}.
For the case of no scale-separation there exist the generalized MsFEM method \cite{EGH2013}, which incorporates model reduction ideas, and most recently a work combining the reduced basis method with localized orthogonal decomposition (see \cite{AH2014a}).

It is vital for an efficient and reliable use of RB as well as LRBMS methods to have access to an estimate on the model reduction  error.
Such an estimate is used to drive the adaptive Greedy basis generation during the off-line phase of the computation and to ensure the quality of the reduced solution during the on-line phase.
It is usually given by a residual based estimator involving the stability constant and the residual in a dual norm.
It was shown in \cite{AHKO2012} that such an estimator can be successfully applied in the context of the LRBMS, but it was also pointed out that an estimator relying on global information might not be computationally feasible since too much work is required in the off-line part of the computation.

The novelty of this contribution lies in a completely different approach to error estimation -- at least in the context of RB methods.
We make use of the ansatz of local error estimation presented in \cite{ESV2010} which measures the error by a conforming reconstruction of the physical quantities involved, specifically the diffusive flux $-\lambda(\param)\kappa_\eps \gradient p(\param)$.
This kind of local error estimation was proven to be very successful in the context of multi-scale problems and very robust with respect to $\eps$.
We show in this work how we can transfer those ideas to the framework of the LRBMS to obtain an estimate of the error
$\energynorm{p(\param) - p_\red(\param)}{\param}$.
We would like to point out that we are able to estimate the error against the weak solution $p(\param)$ in a parameter dependent energy norm while traditional RB approaches only allow to estimate the model reduction error in a parameter independent norm and only against the discrete solution.
In principal, this approach is able to turn the LRBMS method into a full multi-scale approximation scheme, while traditional RB methods can only be seen as a model reduction technique.
We would also like to point out that, to the best of our knowledge, this approach (first published in \cite{OS2014}) is the first one to make use of local error information in the context of RB methods.

\section{Problem formulation and discretization}
\label{section::problem_formulation}

We consider linear elliptic problems in a bounded connected domain $\Omega \subset \R^d$, for $d=2, 3$, with polygonal boundary $\boundary{\Omega}$ and a multiplicative splitting of the influences of the multi-scale parameter $\eps$ and the parameter $\param$.
An example is given by the problem of finding a global pressure $p(\param) \in H^1_0(\Omega)$ for a set of admissible parameters $\param \in \Params$, such that
\begin{align}
  - \divergence \big(
        \lambda(\param) \kappa_\eps \gradient p(\param)
      \big)
    = f
    &&\text{in } \Omega
  \label{equation::problem::global_pressure}
\end{align}
in a weak sense in $H^1_0(\Omega)$, where $H^1$ denotes the usual Sobolev space of weakly differentiable functions and $H^1_0(\Omega) \subset H^1(\Omega)$ its elements which vanish on the boundary in the sense of traces.
Problems of this kind arise for instance in the context of the global pressure formulation of two-phase flow in porous media.
Using an IMPES discretization scheme of the pressure/saturation system (see \cite{CHM2006} and the references therein), an equation of the form \eqref{equation::problem::global_pressure} has to be solved in each time step for a different parameter $\param \in \Params$.
In that context $\lambda(\param)$ denotes the scalar total mobility and $\kappa_\eps$ denotes a possibly complex heterogeneous permeability tensor; external forces are collected in $f$ and the parameter $\param$ models the influence of the global saturation (see Definition \ref{definition::problem::weak_solution} for details on $\lambda$, $\kappa_\eps$ and $f$).
Note that more complex boundary conditions and additional parameter dependencies of the boundary values as well as the right hand side (modeling parameter dependent wells, for instance) are possible, but do not lie within the scope of this work.

\textbf{Triangulation.} We require two nested partitions of $\Omega$, a coarse one, $\Triangulation$, and a fine one, $\triangulation$.
Let $\triangulation$ be a simplicial triangulation of $\Omega$ with elements $\element \in \triangulation$.
In the context of multi-scale problems we call $\tau_h$ a \emph{fine triangulation} if it resolves all features of the quantities involved in \eqref{equation::problem::global_pressure}, specifically if $\restrict{\kappa_\eps}{\element}\in [L^\infty(\element)]^{d\times d}$ is constant for all $\element \in \triangulation$.
This assumption is a natural one in the context of two-phase flow problems where the permeability field is usually given by piecewise constant measurement data.
For simplicity we require $\triangulation$ to fulfill the requirements stated in \cite[Sect. 2.1]{ESV2010}, namely shape-regularity and the absence of hanging nodes; an extension to more general triangulations is possible analogously to \cite[A.1]{ESV2010}.
We only require the coarse elements $\Element\in\Triangulation$ to be shaped such that a local Poincar\'{e} inequality in $H^1(\Element)$ is fulfilled (see the requirements of Theorem \ref{theorem::error::locally_computable_abstract_energy_norm_estimate}).
We collect all fine faces in $\faces$, all coarse faces in $\Faces$ and denote by $\Neighbors(\element) \subset \triangulation$ and $\Neighbors(\Element) \subset \Triangulation$ the neighbors of $\element \in \triangulation$ and $\Element \in \Triangulation$, respectively and by $h_*$ the diameter of any element $*$ of the sets $\triangulation$, $\Triangulation$, $\faces$ or $\Faces$.
We collect in $\triangulation^\Element \subset \triangulation$ the fine elements of $\triangulation$ that cover the coarse element $\Element$ and in $\faces^* \subset \faces$ all faces that cover the set $*$, e.g. by $\faces^\element$ the faces of a fine element $\element \in \triangulation$, by $\faces^\Face$ the faces that cover a coarse face $\Face \in \Faces$ and so forth; the same notation is used for coarse faces $\Faces^* \subset \Faces$.
In addition we denote the set of all boundary faces by $\boundaryfaces{h} \subset \faces$ and the set of all inner faces, that share two elements, by $\innerfaces{h} \subset \faces$, such that $\boundaryfaces{h} \cup \innerfaces{h} = \faces$ and $\boundaryfaces{h} \cap \innerfaces{h} = \emptyset$.
We also denote the set of fine faces which lie on the boundary of any coarse element $\Element \in \Triangulation$ by $\boundaryfaces{h}^\Element := \bigcup_{\Face \in \Faces^\Element} \faces^\Face$ and by $\innerfaces{h}^\Element := \faces^\Element \backslash \boundaryfaces{h}^\Element$ the set of fine faces which lie in the interior of the coarse element.
Finally, we assign a unit normal $n_\face \in \R^d$ to each inner face $\boundary \element^- \cap \boundary \element^+ = \face \in \innerfaces{h}$, pointing from $\element^-$ to $\element^+$, and also denote the unit outward normal to $\boundary \Omega$ by $n_\face$ for a boundary face $\face = \boundary\element^- \cap \boundary{\Omega}$, for $\element^-\text{,} \element^+ \in \triangulation$.

\subsection{The continuous problem}

For our analysis we define the \emph{broken Sobolev space} $H^1(\triangulation) \subset L^2(\Omega)$ by $H^1(\triangulation) := \big\{ q \in L^2(\Omega) \;\big|\; q|_\element \in H^1(\element) \;\; \forall \element\in\triangulation \big\}$ as the most general space for the weak, the discrete and the reduced solution defined below.
In the same manner we denote the local broken Sobolev spaces $H^1(\triangulation^\Element)\subset L^2(\Element)$ for all coarse elements $\Element\in\Triangulation$ and denote by $\gradienth:H^1(\triangulation)\to[L^2(\Omega)]^d$ the \emph{broken gradient operator} which is locally defined by $\restrict{(\gradienth q)}{\element}:=\gradient (\restrict{q}{\element})$ for all $\element\in\triangulation$ and $q\in H^1(\triangulation)$.
Now given $\lambda(\param)$, $\kappa_\eps$ and $f$ as stated in Definition \ref{definition::problem::weak_solution} we define the parametric bilinear form $b: \Params \to [H^1(\triangulation) \times H^1(\triangulation) \to \R]$, $\param \mapsto [(p, q) \mapsto b(p, q; \param)]$, and the linear form $l: H^1(\triangulation) \to \R$ by $b(p, q; \param) := \sum_{\Element\in\Triangulation} b^T(p, q; \param)$ and $l(q) := \sum_{\Element\in\Triangulation} l^\Element(q)$, respectively, and their local counterparts $b^\Element : \Params \to [H^1(\triangulation^\Element) \times H^1(\triangulation^\Element) \to \R]$, $\param \mapsto [(p, q) \mapsto b^\Element(p, q; \param)]$, and $l^\Element: H^1(\triangulation^\Element) \to \R$ for all $\Element\in\Triangulation$, $\param\in\Params$ and $p\komma q \in H^1(\triangulation)$ by
\begin{align}
  b^\Element(p, q; \param) := \int\limits_\Element (\lambda(\param) \kappa_\eps \gradienth p)\mydot\gradienth q \dx
    &&\text{and}&&
  l^\Element(q):= \int\limits_\Element f q\dx \komma
  \notag
\end{align}
respectively.

\begin{definition}[Weak solution]
\label{definition::problem::weak_solution}
  Let $f \in L^2(\Omega)$ be bounded, $\lambda(\param) \in L^\infty(\Omega)$ be strictly positive for all $\param\in\Params$ and $\kappa_\eps \in [L^\infty(\Omega)]^{d\times d}$ symmetric and uniformly positive definite, such that $\lambda(\param) \kappa_\eps \in [L^\infty(\Omega)]^{d\times d}$ is bounded from below (away from 0) and above for all $\param\in\Params$.
  We define the \emph{weak solution} $p: \Params \to H^1_0(\Omega)$ for a parameter $\param\in\Params$, such that
  \begin{align}
    b(p(\param), q; \param) = l(q) &&\text{for all } q \in H^1_0(\Omega)\punkt
    \label{equation::problem::weak_solution}
  \end{align}
\end{definition}

Note that, since $b(\cdot, \cdot; \param)$ is continuous and coercive for all $\param\in\Params$ (due to the assumptions on $\lambda(\param) \kappa_\eps$) and since $l$ is bounded, there exists a unique solution of \eqref{equation::problem::weak_solution} due to the Lax-Milgram Theorem.
Given these requirements we denote by $0 < c_\eps^\element(\param)$ and $c_\eps^\element(\param) \leq C_\eps^\element(\param)$ the smallest and largest eigenvalue of $\restrict{(\lambda(\param) \kappa_\eps)}{\element}$, respectively, for any $\param \in \Params$ and $\element \in \triangulation$ and additionally define $0 < c_\eps^\element := \min_{\param\in\Params} c_\face^\element(\param)$ and $c_\face^\element < C_\face^\element := \max_{\param\in\Params} C_\face^\element(\param)$.

\textbf{A note on parameters.} In addition to the assumptions we posed on $\lambda$ above we also demand it to be \emph{affinely decomposable} with respect to $\param\in\Params$, i.e there exist $\varXi \in \N$ strictly positive \emph{coefficients} $\theta_\xi:\Params\to\R$ and $\varXi$ nonparametric \emph{components} $\lambda_\xi \in L^\infty(\Omega)$, for $1 \leq \xi \leq \varXi$, such that $\lambda(x; \param) = \sum_{\xi=1}^{\varXi} \theta_\xi(\param) \lambda_\xi(x)$.
We can then compare $\lambda$ for two parameters $\param, \paramFixed \in \Params$ by $\alphaParam \lambda(\paramFixed) \;\leq\; \lambda(\param) \;\leq\; \gammaParam \lambda(\paramFixed)$, where $\alphaParam :=\min_{\xi = 1}^{\varXi} {\theta_\xi(\param)} {\theta_\xi(\paramFixed)}^{-1}$ and $\gammaParam := \max_{\xi = 1}^{\varXi} {\theta_\xi(\param)}{\theta_\xi(\paramFixed)}^{-1}$
denote the positive equivalence constants.
This assumption on the data function $\lambda$ is a common one in the context of RB methods and covers a wide range of physical problems.
If $\lambda$ does not exhibit such a decomposition one can replace $\lambda$ by an arbitrary close approximation using Empirical Interpolation techniques (see \cite{BMNP2004,DHO2012}) which does not impact our analysis.
All quantities that linearly depend on $\lambda$ inherit the above affine decomposition in a straightforward way.
In particular there exist component bilinear forms such that $b$ and $b^\Element$ (and their discrete counterparts introduced further below) are also affinely decomposable.
Since we would like to estimate the error in a problem dependent norm we introduce a \emph{parameter dependent energy (semi) norm} $\energynorm{\cdot}{\cdot}: \Params \to [H^1(\triangulation)\to\R]$, $\param \to [q \to \energynorm{q}{\param}]$, defined by $\energynorm{q}{\param} := \big(\sum_{\Element\in\Triangulation} \energynorm{q}{\param, \Element}^2 \big)^{1/2}$ with the local semi-norm defined by $\energynorm{q}{\param, \Element} := b^\Element(q, q; \param)^{1/2}$, for all $\Element\in\Triangulation$, $\param \in \Params$ and $q \in H^1(\triangulation)$; the local semi-norm on the fine triangulation, $\energynorm{\cdot}{\param, \element}$ for any $\element \in \triangulation$, is defined analogously to $\energynorm{\cdot}{\param, \Element}$.
Note that $\energynorm{\cdot}{\cdot}$ is a norm only on $H^1_0(\Omega)$.
We can compare these norms for any two parameters $\param, \paramFixed \in \Params$ using the above decomposition of $\lambda$ (the same holds true for the local semi-norms):
\begin{align}
  \sqrt{\alphaParam} \;\; \energynorm{\cdot}{\paramFixed}
    \;\;\;\leq\;\;\; \energynorm{\cdot}{\param}
    \;\;\;\leq\;\;\; \sqrt{\gammaParam} \;\; \energynorm{\cdot}{\paramFixed}
  \label{equation::problem::parameter_norm_equivalence}
\end{align}

\subsection{The discretization}
\label{subsection::discretization}

We discretize \eqref{equation::problem::weak_solution} by allowing for a suitable discretization of at least first order inside each coarse element $\Element\in\Triangulation$ and by coupling those with a \underline{S}ymmetric \underline{w}eighted \underline{I}nterior-\underline{P}enalty \underline{d}iscontinuous \underline{G}alerkin (SWIPDG) discretization along the coarse faces of $\Triangulation$.
This ansatz can be either interpreted as an extension of the SWIPDG discretization on the coarse partition $\Triangulation$, where we further refine each coarse element and introduce an additional local discretization, or it can be interpreted as a domain-decomposition approach, where we use local discretizations, defined on subdomains given by the coarse partition, which are then coupled by the SWIPDG fluxes.
In view of the latter, this ansatz shows some similarities to \cite{BZ2006} but allows for a wider range of local discretizations.
A similar ansatz for a multi-numerics discretization using a different coupling strategy was independently developed and recently introduced in \cite{PVWW2013}.
We will present two particular choices for the local discretizations and continue with the definition of the overall DG discretization.

\textbf{Local discretizations.} The main idea is to approximate the local bilinear forms $b^\Element$, which are defined on the local subdomain triangulations $\triangulation^\Element$, by local discrete bilinear forms $b_h^\Element$ discretizing \eqref{equation::problem::global_pressure} on $\Element$ with homogeneous Neumann boundary values.
We will additionally choose local discrete ansatz spaces $Q_h^{k, \Element} \subset H^1(\triangulation^\Element)$, with polynomial order $k \in \N$, to complete the definition of the local discretizations.
The first natural choice for the local discretization is to use a standard \underline{c}ontinuous \underline{G}alerkin (CG) discretization, which we obtain by setting $b_h^\Element = b^\Element$ and $Q_h^{k, \Element} = S_h^k(\triangulation^\Element)$ with
\begin{align}
  S_h^k(\triangulation^\Element) &:=
      \big\{
        q \in C^0(\Element)
      \;\big|\;
        \restrict{q}{\element} \in \Pk_k(\element) \quad \forall \element \in \triangulation^\Element
      \big\}
    \;\;\subset\;\; H^1(\Element) \komma
  \notag
\intertext{%
  where $\Pk_k(\omega)$ denotes the set of polynomials on $\omega \subseteq \Omega$ with total degree at most $k\in \N$.
  Another choice is to use a discontinuous space $Q_h^{k, \Element} = Q_h^k(\triangulation^\Element)$, given by
}
  Q_h^k(\triangulation^\Element) &:=
      \big\{
        q \in L^2(\Element)
      \;\big|\;
        \restrict{q}{\element} \in \Pk_k(\element) \quad \forall \element \in \triangulation^\Element
      \big\}
    \;\;\subset\;\; H^1(\triangulation^\Element) \komma
  \notag
\end{align}
and to choose $b_h^\Element$ from a family of DG discretizations.
Therefore we introduce the technicalities needed to state a common framework for the non symmetric, the incomplete, the symmetric and the symmetric weighted \underline{i}nterior-\underline{p}enalty (IP) DG discretization (henceforth denoted by NIPDG, IIPDG, SIPDG and SWIPDG, respectively, see \cite{ESZ2009} and the references therein), following \cite[Sect. 2.3]{ESV2010}.

For a function $q \in H^1(\triangulation)$ that is double-valued on interior faces we denote its jump on an inner face $\face \in \innerfaces{h}$ by $\jump{q}_\face := q^- - q^+$ with $q^\pm := \restrict{q}{\element^\pm}$, recalling that $\face = \element^- \cap \element^+$ for $\element^\pm \in \triangulation$.
We also assign weights $\omega_\face^-\text{, } \omega_\face^+ > 0$ to each inner face, such that $\omega_\face^- + \omega_\face^+ = 1$, and denote the weighted average of $q$ by $\mean{q}_\face := \omega_\face^- q^- + \omega_\face^+ q^+$.
On a boundary face $\face \in \boundaryfaces{h}$ we set $\omega_\face^- = 1$, $\omega_\face^+ = 0$, $\jump{q}_\face := q$ and $\mean{q}_\face := q$.
With these definitions we define the local bilinear form $b_h^\Element: \Params \to [H^1(\triangulation^\Element) \times H^1(\triangulation^\Element) \to \R]$, $\param \mapsto [(p, q) \mapsto b_h^\Element(p, q; \param)]$ for $\vartheta \in \{-1, 0, 1\}$ by
\begin{align}
  b_h^\Element (p, q; \param) := b^\Element(p,q; \param)
    +\sum_{\face \in \innerfaces{h}^\Element}
      \Big( \vartheta b_c^\face(q, p; \param) + b_c^\face(p, q; \param) + b_p^\face(q, p; \param) \Big)
  \label{equation::problem::local_generalized_IPDG_bilinearform}
\end{align}
on $\Element \in \Triangulation$, with its coupling and penalty parts $b_c^\face$ and $b_p^\face$, respectively, defined by
\begin{align}
  b_c^\face(p, q; \param) := \int\limits_\face - \mean{(\lambda(\param) \kappa_\eps \gradient p) \mydot n_\face}_\face \jump{q}_\face \ds
  &&\text{and}&&
    b_p^\face(p, q; \param) := \int\limits_\face \sigma_\face(\param) \jump{p}_\face \jump{q}_\face \ds
    \komma
  \notag
\end{align}
for all $\param \in \Params$, all $p, q \in H^1(\triangulation)$ and all $\face \in \faces$.
The parametric positive penalty function $\sigma_\face(\param): \Params \to \R$ is given by $\sigma_\face(\param) := \sigma h_\face^{-1} \mean{\lambda(\param)}_\face \sigma_\eps^\face$, where $\sigma \geq 1$ denotes a user-dependent parameter and the locally adaptive weight is given by $\sigma_\eps^\face := \delta_\face^+ \delta_\face^-(\delta_\face^+ + \delta_\face^-)^{-1}$ for an interior face $\face \in \innerfaces{h}$ and by $\sigma_\eps^\face := \delta_\face^-$ on a boundary face $\face \in \boundaryfaces{h}$, respectively, with $\delta_\face^\pm := n_\face \mydot \kappa_\eps^\pm \mydot n_\face$.
Using the weights $\omega_\face^\pm := 1/2$ we obtain the NIPDG bilinear form for $\vartheta = -1$, the IIPDG bilinear form for $\vartheta = 0$ and the SIPDG bilinear form for $\vartheta = 1$.
We obtain the SWIPDG bilinear form for $\vartheta = 1$ by using locally adaptive weights $\omega_\face^- := \delta_\face^+(\delta_\face^+ + \delta_\face^-)^{-1}$ and $\omega_\face^+ := \delta_\face^-(\delta_\face^+ + \delta_\face^-)^{-1}$.
From now on we assume that $b_h^\Element$ is of the form \eqref{equation::problem::local_generalized_IPDG_bilinearform}, since this is the most general case and also covers a CG discretization, where all face terms vanish due to the nature of $S_h^k(\triangulation^\Element)$.

\textbf{Global coupling.} Now given suitable local discretizations on the coarse elements $\Element \in \Triangulation$ we couple those along the coarse faces $\Face \in \Faces$ using a SWIPDG discretization again and define the bilinear form $b_h: \Params \to [H^1(\triangulation) \times H^1(\triangulation) \to \R]$, $\param \mapsto [(p, q) \mapsto b_h(p, q; \param)]$, by
\begin{align}
  b_h(p, q; \param) &:=
    \sum_{\Element \in \Triangulation}
      b_h^\Element(p, q; \param)
    +\sum_{\Face \in \Faces}
      b_h^\Face(p, q; \param)\komma
  \label{equation::problem::global_bilinear_form}
\intertext{%
  where we use the SWIPDG variants of $\omega_\face^\pm$ to define the coupling bilinear form $b_h^\Face: \Params \to [H^1(\triangulation) \times H^1(\triangulation) \to \R]$, $\param \to [(p, q) \to b_h^\Face(p, q; \param)]$, by
}
  b_h^\Face(p, q; \param) &:=
  \sum_{\face \in \faces^\Element}
      \Big( b_c^\face(q, p; \param) + b_c^\face(p, q; \param) + b_p^\face(q, p; \param) \Big)
\end{align}
for all $\Face \in \Faces$, all $\param \in \Params$ and all $p, q \in H^1(\triangulation)$.
Accordingly, we define the DG space $Q_h^k \subset H^1(\triangulation)$ for $k \geq 1$ by
\begin{align}
  Q_h^k :=
    \big\{
      q \in H^1(\triangulation)
    \;\;\big|\;\;
      \restrict{q}{\Element} \in Q_h^{k, \Element} \quad \forall \Element \in \Triangulation
    \big\}\komma
  \notag
\end{align}
with $Q_h^{k, \Element}$ either being the local CG space $S_h^k(\triangulation^\Element)$ or the local DG space $Q_h^k(\triangulation^\Element)$.

\begin{definition}[DG solution]
  Let $\sigma$ be large enough, such that $b_h(\cdot, \cdot; \param)$ is continuous and coercive for all $\param \in \Params$.
  We define the \emph{DG solution} $p_h: \Params \to Q_h^1$ for a parameter $\param \in \Params$, such that
  \begin{align}
    b_h(p_h(\param), q_h; \param) = l(q_h) &&\text{for all } q_h \in Q_h^1\punkt
    \label{equation::problem::discrete_solution}
  \end{align}
\end{definition}

As always with DG methods, coercivity is to be understood with respect to a DG norm on $Q_h^1$ (i.e., given by the semi $H^1$ norm combined with a DG jump norm) and there exists a unique solution of \eqref{equation::problem::discrete_solution} due to the Lax-Milgram Theorem, if $\sigma$ is chosen accordingly.

\begin{remark}[Properties of the discretization]
  Depending on the choice of the coarse partition and the local discretization we can recover several discretizations from \eqref{equation::problem::global_bilinear_form}.
  Choosing $\Triangulation = \Omega$ and $Q_h^{k, \Element} = S_h^k(\triangulation^\Element)$ yields a standard CG discretization (except for the treatment of the boundary values), for instance.
  Choosing $\Triangulation = \triangulation$, on the other hand, results in the standard SWIPDG discretization proposed in \cite{ESZ2009,ESV2010}.
  Note that the local discretizations as well as the polynomial degree $k$ do not have to coincide on each coarse element (or even inside a coarse element when using a local DG space).
  It is thus possible to balance the computational effort by choosing local CG or $k$-adaptive DG discretizations.
  This puts our discretization close to the multi-numerics discretization proposed in \cite{PVWW2013}, where the latter allows for an even wider range of local discretizations while coupling along the coarse faces using Mortar methods.
  Concerning the choice of the user dependent penalty factor, we found an automated choice of $\sigma$ depending on the polynomial degree $k$, as proposed in \cite{ER2007}, to work very well.
\end{remark}

\section{Model reduction}
\label{section::model_reduction}

Disregarding the multi-scale parameter $\eps$ for the moment, model reduction using \underline{r}educed \underline{b}asis (RB) methods is a well established technique to reduce the computational complexity of \eqref{equation::problem::discrete_solution} with respect to the parameter $\param$.
It has been successfully applied in multi-query contexts, where \eqref{equation::problem::discrete_solution} has to be solved for many parameters $\param$ (e.g. in the context of optimization), and in real-time contexts, where a solution of \eqref{equation::problem::discrete_solution} (or a quantity depending on it) has to be available for some $\param$ as fast as possible (see \cite{AHKO2012,DHO2012,Ohl2012} and the references therein).
The general idea of RB methods is to span a reduced space $Q_\red \subset Q_h^k$ by post-processed solutions of \eqref{equation::problem::discrete_solution} for a selection of parameters and to apply a Galerkin projection of \eqref{equation::problem::discrete_solution} onto this reduced space.
Together with the assumption we posed on the data function in section \ref{section::problem_formulation} this allows for the well-known off-line/on-line decomposition of the RB method, where all quantities depending on $Q_h^k$ (in particular high-dimensional evaluations of the bilinear form) can be precomputed and stored in so called Gramian-matrices during a possibly computationally expensive off-line phase.
In the on-line phase of the simulation only low dimensional quantities depending on the parameter dependency of the data functions and the dimension of $Q_\red$ need to be computed which can usually be done in real time and even on low end devices such as smartphones or embedded devices.

It has been shown in \cite{AHKO2012}, however, that the computational complexity of the off-line phase can become unbearable if the RB method is applied to parametric \emph{multi-scale} problems, such as \eqref{equation::problem::discrete_solution} for small scales $\eps \ll 1$.
There are two main drawbacks of classical RB methods in this context: $(i)$ all high-dimensional quantities (solution snapshots, operator matrices, \dots) usually depend on the fine triangulation $\triangulation$, the size of which scales with $\Order(1/\eps)$.
Correspondingly all evaluations involving these quantities (scalar products, operator inversions, \dots) become increasingly expensive for small scales $\eps$;
$(ii)$ the generation of the reduced space $Q_\red$ usually requires the discrete problem \eqref{equation::problem::discrete_solution} to be solved for several parameters $\param$.
In the context of multi-scale problems, however, it is usually only feasible to carry out very few global solutions of the original problem on the fine triangulation, if at all.

The first of these shortcomings has been successfully addressed by the \underline{l}ocalized \underline{r}educed \underline{b}asis \underline{m}ulti-\underline{s}cale (LRBMS) method introduced in \cite{KOH2011,AHKO2012}, which takes advantage of the coarse partition and the underlying discretization to form local reduced spaces $Q_\red^\Element \subset Q_h^{k, \Element}$ on each coarse element instead of the usual global RB space.
This allows to balance the computational effort of the off-line versus the on-line phase by selecting an appropriate size of the coarse partition $\Triangulation$ depending on the multi-scale parameter $\eps$ and the real-time or multi-query context of the application.
But it was also shown in \cite{AHKO2012}, that the standard residual based estimator which is usually used in the context of RB methods does not scale well in the context of multi-scale problems.
We thus finalize the definition of the LRBMS method in this contribution with an efficiently off-line/on-line decomposable a-posteriori error estimator (see the next section).
The second shortcoming of classical RB methods is addressed in section \ref{section::online_enrichment}, where we propose an on-line enrichment extension for the LRBMS method based on the presented error estimator, which finally turns the LRBMS method into a fully capable multi-scale method.

\textbf{The reduced problem.} Let us assume that we are given a local reduced space $Q_\red^\Element \subset Q_h^{k, \Element}$ on each coarse element $\Element \in \Triangulation$.
Those are usually, but not necessarily, spanned by solutions of \eqref{equation::problem::discrete_solution} and of low dimension, i.e., $\dim Q_\red^\Element \ll \dim Q_h^{k, \Element}$.
We then define the \emph{coarse reduced space} $Q_\red(\Triangulation) \subset Q_h^k$ by
\begin{align}
  Q_\red(\Triangulation) &:=
      \big\{
        q \in Q_h^k
      \;\big|\;
        \restrict{q}{\Element} \in Q_\red^\Element \quad \forall \Element \in \Triangulation
      \big\}
    \;\;\subset\;\; Q_h^k \komma
  \notag
\end{align}
and obtain the reduced solution by a Galerkin projection of \eqref{equation::problem::discrete_solution} onto $Q_\red(\Triangulation)$.

\begin{definition}[Reduced solution]
  We define the \emph{reduced solution} $p_\red: \Params \to Q_\red(\Triangulation)$ for a parameter $\param \in \Params$, such that
  \begin{align}
    b_h(p_\red(\param), q_\red; \param) = l(q_\red) &&\text{for all } q_\red \in Q_\red(\Triangulation) \punkt
    \label{equation::problem::reduced_solution}
  \end{align}
\end{definition}

\textbf{Off-line/on-line decomposition.} The well-known off-line/on-line decomposition of RB methods relies on an affine parameter dependence of the data functions (see section \ref{section::problem_formulation}) that carries over to the discrete bilinear form, i.e., there exist $\varXi$ nonparametric component bilinear forms $b_{h, \xi}: H^1(\triangulation) \times H^1(\triangulation) \to \R$, for $1 \leq \xi \leq \varXi$, such that $b_h(p, q; \param) = \sum_{\xi = 1}^\varXi \theta_\xi(\param) b_{h,\xi}(p, q)$, with $\theta_\xi$ being the coefficients of the corresponding decomposition of $\lambda$.
In the standard RB framework, given a basis $\big\{ q_{\red, 1}, \dots, q_{\red, N} \big\}$ of $Q_\red$, with $N := \dim Q_\red$, one would precompute dense Gramian matrices $\underline{b_{h,\xi}} \in \R^{N \times N}$, given by $\big(\underline{b_{h, \xi}}\big)_{i, j} := b_{h, \xi}(q_{\red, i}, q_{\red, j})$ during the off-line phase.
In the on-line phase, given a parameter $\param \in \Params$, one would obtain the reduced system matrix by a low dimensional summation $\underline{b_h(\param)} := \sum_{\xi = 1}^\varXi \theta_\xi(\param) \underline{b_{\xi, h}} \in \R^{N \times N}$, which only involves evaluations of the scalar coefficient functions $\theta_\xi$ for the current parameter.
The resulting dense reduced system is of size $N \times N$ and does not depend on the dimension of $Q_h^k$.
It can thus be solved with a complexity of $\Order(N^3)$ in the on-line phase.
Denoting by $N_h := \dim Q_h^k$ and $N_h^\Element := \dim Q_h^k(\triangulation^\Element)$ (with $N_h = \sum_{\Element\in\Triangulation} N_h^\Element$) the number of Degrees of Freedom of the global and the local spaces, respectively, the reduction of $b_h$ during the off-line phase, however, would be of a computational complexity of $\Order(\varXi N^2 N_h)$, which can become too expensive in the context of small scales $\eps$.

The flexible framework of the LRBMS method, however, allows for a more local approach.
Since the affine decomposition of the data function $\lambda$ also carries over to the local bilinear forms $b_h^\Element$ and the coupling bilinear forms $b_h^\Face$, we define local Gramian matrices $\underline{b_{h, \xi}^\Element} \in \R^{N^\Element \times N^\Element}$ on each coarse element $\Element \in \Triangulation$, given by
\begin{align}
  \Big(
      \underline{b_{h, \xi}^\Element}
    \Big)_{i, j} :=
    b_{h, \xi}^\Element(q_{\red, i}^\Element, q_{\red, j}^\Element)
    +\sum_{\Face \in \Faces^\Element}
      b_{h, \xi}^\Face(q_{\red, i}^\Element, q_{\red, j}^\Element)
  \komma\notag
\end{align}
where $\big\{ q_{\red, 1}^\Element, \dots, q_{\red, N^\Element}^\Element \big\}$ denotes a basis of $Q_\red^\Element$ with $N^\Element := \dim Q_\red^\Element$.
In addition, we define coupling Gramian matrices $\underline{b_{h, \xi}^{\Element, \Neighbor}} \in \R^{N^\Element \times N^\Neighbor}$ for all coarse elements $\Element \in \Triangulation$ and their neighbors $\Neighbor \in \Neighbors(\Element)$ by
\begin{align}
  \Big(
      \underline{b_{h, \xi}^{\Element, \Neighbor}}
    \Big)_{i, j} :=
    \sum_{\Face \in \Faces^\Element \cap \Faces^\Neighbor}
      b_{h, \xi}^\Face(q_{\red, i}^\Element, q_{\red, j}^\Neighbor)
  \punkt\notag
\end{align}
In the same manner, we define local vectors $\underline{l^\Element} \in \R^{N^\Element}$, given by $\big(\underline{l^\Element}\big)_j := l^\Element(q_{\red, j}^\Element)$, for all $\Element \in \Triangulation$.
The global Gramian matrices $\underline{b_{h, \xi}} \in \R^{N \times N}$ and the global vector $\underline{l} \in \R^N$, with $N := \sum_{\Element \in \Triangulation} N^\Element$, are then given by a standard DG mapping (with respect to the coarse triangulation) of their local counterparts.
The complexity to directly solve the reduced system in the on-line phase, $\Order\big((\sum_{\Element \in \Triangulation} N^\Element)^3\big)$, is by definition larger than for standard RB methods.
The reduction of $b_h$, however, can be carried out with a complexity of roughly $\Order(\varXi \sum_{\Element \in \Triangulation} {N^\Element}^2 N_h^\Element)$, which can be computed significantly faster than in the standard RB setting described above (since $\varXi |\Triangulation|$ local computations can be carried out in parallel), depending on the choice of $\Triangulation$.

\begin{remark}[Properties of the reduced system]
  The reduced system matrix of the LRBMS method is obviously larger than the one we would obtain for standard RB methods (given that it scales with $|\Triangulation|$).
  On the other hand we obtain a sparse matrix of dense blocks (stemming from the local and coupling blocks and the DG mapping) that is usually much smaller than the system matrix of the high-dimensional problem.
  Thus, if $|\Triangulation|$ is large, the reduced system can also be solved using sparse iterative solvers with a complexity of $\Order\big((\sum_{\Element \in \Triangulation} N_h^\Element)^2\big)$.
\end{remark}

\section{Error analysis}

Our error analysis is a generalization of the ansatz presented in \cite{ESV2010} to provide an estimator for our DG approximation \eqref{equation::problem::discrete_solution} as well as for our LRBMS approximation \eqref{equation::problem::reduced_solution}.
The main idea of the error estimator presented in \cite{Voh2007,ESV2010} is to observe that the approximate DG diffusive flux $-\lambda(\param)\kappa_\eps \gradienth p_h(\param)$ is nonconforming while its exact counterpart belongs to $\Hdiv(\Omega) \subset [L^2(\Omega)]^d$, which denotes the space of vector valued functions the divergence of which lies in $L^2(\Omega)$.
The idea of \cite{Voh2007,ESV2010} is to reconstruct the discrete diffusive flux in a conforming Raviart-Thomas-N\'{e}d\'{e}lec space $V_h^l(\triangulation) \subset\Hdiv(\Omega)$ and compare it to the nonconforming one.
Their error analysis relies on a local conservation property of the reconstructed flux on the fine triangulation $\triangulation$ to prove estimates local to the fine triangulation.

We transfer this concept to the DG discretization defined in section \ref{subsection::discretization} and prove estimates local to the coarse triangulation that are valid for the DG approximation as well as for our LRBMS approximation.
We obtain mild requirements for the coarse triangulation and the local approximation spaces, namely that a local Poincar\'{e} inequality holds on each coarse element and that the constant function $\one$ is present in the local approximation spaces.
The latter is obvious for traditional discretizations and can be easily achieved for the LRBMS approximation.
The estimates are fully off-line/on-line decomposable and can thus be efficiently used for model reduction in the context of the LRBMS.
Preliminary results were published in \cite{OS2014}.

We begin by stating an \emph{abstract energy norm estimate} (see \cite[Lemma 4.1]{ESV2010}) that splits the difference between the weak solution $p \in H^1_0(\Omega)$ of \eqref{equation::problem::weak_solution} and any function $q \in H^1(\triangulation)$ into two contributions.
This abstract estimate does not depend on the discretization and thus leaves the choice of $s$ and $v$ open.
Note that we formulate the following Lemma with different parameters for the energy norm and the weak solution.
The price we have to pay for this flexibility is the additional constants involving $\alphaParam$ and $\gammaParam$, that vanish if $\paramFixed$ and $\param$ coincide.
In the following we denote the product over a space $V(\omega)$, for $\omega \subseteq \Omega$, by $(\cdot, \cdot)_{V, \omega}$ and omit $\omega$ if $\omega = \Omega$; the same holds for the induced norm $\norm{\cdot}{V, \omega}$.
\begin{lemma}[Abstract energy norm estimate]
\label{lemma::error::abstract_energy_norm_estimate}
  Let $p(\param) \in H^1_0(\Omega)$ be the weak solution of \eqref{equation::problem::weak_solution} for $\param \in \Params$ and let $p_h \in H^1(\triangulation)$ and $\paramFixed \in \Params$ be arbitrary.
  Then
  \begin{align}
    \energynorm{p(\param) - p_h}{\paramFixed}
      &\leq \tfrac{1}{\sqrt{\alphaParam}} \Big(
      \inf\limits_{s\in H^1_0(\Omega)} \sqrt{\gammaParam} \energynorm{p_h - s}{\paramFixed}
    \label{equation::error::abstract_energy_norm_estimate}\\
    &+ \inf\limits_{v\in \Hdiv(\Omega)}
      \Big\{
        \sup\limits_{\substack{\varphi\in H^1_0(\Omega)\\\energynorm{\varphi}{\param}=1}}
          \big\{
            \big(
              f - \divergence v, \varphi
            \big)_{L^2}
            -\big(
              \lambda(\param)\kappa_\eps \mydot \gradienth p_h + v, \gradient\varphi
            \big)_{L^2}
          \big\}
      \Big\}\Big)
    \notag\\
    &\leq \tfrac{\sqrt{\gammaParam}}{\sqrt{\alphaParam}} \;\;2\;\; \energynorm{p(\param) - p_h}{\paramFixed}\punkt
    \notag
  \end{align}
\end{lemma}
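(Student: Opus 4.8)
The statement splits into two inequalities: the first (reliability) bounds the error by the estimator, the second (efficiency) bounds the estimator by the error. The plan is to reduce both to the single parameter $\param$, since the factors $1/\sqrt{\alphaParam}$ and $\sqrt{\gammaParam}$ arise precisely from passing between the $\paramFixed$- and the $\param$-energy norm via \eqref{equation::problem::parameter_norm_equivalence}. Abbreviating by $R(v,\varphi)$ the expression in braces in \eqref{equation::error::abstract_energy_norm_estimate}, I would first establish the parameter-clean estimate
\begin{align}
  \energynorm{p(\param) - p_h}{\param}
    \;\leq\; \inf_{s \in H^1_0(\Omega)} \energynorm{p_h - s}{\param}
      \;+\; \inf_{v \in \Hdiv(\Omega)}\;\; \sup_{\substack{\varphi \in H^1_0(\Omega)\\ \energynorm{\varphi}{\param} = 1}} R(v, \varphi)\komma
  \notag
\end{align}
and then insert $\energynorm{\cdot}{\param} \le \sqrt{\gammaParam}\,\energynorm{\cdot}{\paramFixed}$ into the first term while applying $\energynorm{\cdot}{\paramFixed} \le (1/\sqrt{\alphaParam})\,\energynorm{\cdot}{\param}$ on the left; this reproduces the stated first inequality with coefficient one on each infimum.

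For the residual term I would first observe that, since $v \in \Hdiv(\Omega)$ and $\varphi \in H^1_0(\Omega)$, integration by parts gives $(\divergence v, \varphi)_{L^2} = -(v, \gradient\varphi)_{L^2}$, so the two occurrences of $v$ in $R(v,\varphi)$ cancel; using that $p(\param)$ solves \eqref{equation::problem::weak_solution} (hence $(f,\varphi)_{L^2} = (\lambda(\param)\kappa_\eps\gradient p(\param),\gradient\varphi)_{L^2}$) one finds $R(v,\varphi) = b(p(\param)-p_h,\varphi;\param)$, independent of $v$, so that the outer $\inf_v$ is harmless and the residual term equals $\sup_\varphi b(p(\param)-p_h,\varphi;\param)$. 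The core step is then an orthogonal decomposition: I would equip $[L^2(\Omega)]^d$ with the weighted inner product $(\lambda(\param)\kappa_\eps\,\sigma,\tau)_{L^2}$ (an inner product, as $\lambda(\param)\kappa_\eps$ is symmetric and uniformly positive definite), whose induced norm $\norm{\cdot}{\star}$ satisfies $\energynorm{q}{\param} = \norm{\gradienth q}{\star}$. Since the subspace $G := \{\gradient\varphi : \varphi\in H^1_0(\Omega)\}$ is closed (the weighted norm is equivalent to the standard one), the broken flux error splits as $\gradienth(p(\param) - p_h) = \gradient\psi + r$ with $\psi\in H^1_0(\Omega)$ and $r \perp G$.

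By the Pythagorean identity $\energynorm{p(\param)-p_h}{\param}^2 = \norm{\gradient\psi}{\star}^2 + \norm{r}{\star}^2$, and $\sqrt{a^2+b^2}\le a+b$, it then suffices to bound the two pieces. For the conforming piece the projection identity gives $\norm{\gradient\psi}{\star}^2 = b(p(\param)-p_h,\psi;\param) \le \big(\sup_\varphi b(p(\param)-p_h,\varphi;\param)\big)\,\norm{\gradient\psi}{\star}$, so $\norm{\gradient\psi}{\star}$ is controlled by the residual term. For the remainder, for arbitrary $s\in H^1_0(\Omega)$ I would rewrite $r = -\gradienth(p_h - s) + \gradient(p(\param)-s-\psi)$ and exploit $r\perp\gradient(p(\param)-s-\psi)$ to obtain $\norm{r}{\star}^2 = -(\lambda(\param)\kappa_\eps\, r,\gradienth(p_h-s))_{L^2}\le\norm{r}{\star}\,\energynorm{p_h-s}{\param}$, whence $\norm{r}{\star}\le\inf_s\energynorm{p_h-s}{\param}$; this closes the reliability bound. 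For efficiency I would test the two infima: the choice $s = p(\param)$ makes the first term at most $\sqrt{\gammaParam}\,\energynorm{p(\param)-p_h}{\paramFixed}$, and $v = -\lambda(\param)\kappa_\eps\gradient p(\param)\in\Hdiv(\Omega)$ makes $f - \divergence v = 0$ by \eqref{equation::problem::global_pressure}, so that $R(v,\varphi) = b(p(\param)-p_h,\varphi;\param)\le\energynorm{p(\param)-p_h}{\param}\le\sqrt{\gammaParam}\,\energynorm{p(\param)-p_h}{\paramFixed}$ by Cauchy--Schwarz and \eqref{equation::problem::parameter_norm_equivalence}; summing and multiplying by $1/\sqrt{\alphaParam}$ yields the factor $2\sqrt{\gammaParam}/\sqrt{\alphaParam}$.

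The only genuinely non-routine step is this coefficient-one splitting of the nonconforming error, that is, the orthogonal (Helmholtz-type) decomposition in the weighted inner product together with the closedness of $G$: a naive triangle-inequality split $\energynorm{p(\param)-p_h}{\param}\le\energynorm{p(\param)-s}{\param}+\energynorm{p_h-s}{\param}$ would introduce a spurious factor $2$ on the nonconformity contribution and fail to reproduce the sharp constants. Everything else — the cancellation of $v$, the Cauchy--Schwarz estimates, and the conversions between the $\param$- and $\paramFixed$-norms — I expect to be routine bookkeeping.
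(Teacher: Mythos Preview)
Your proposal is correct and follows the same overall architecture as the paper: reduce to the single parameter $\param$, identify the residual term with $b(p(\param)-p_h,\varphi;\param)$ via integration by parts and the weak equation, and for efficiency test with $s=p(\param)$ and $v=-\lambda(\param)\kappa_\eps\gradient p(\param)$.

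The one substantive difference is the handling of the coefficient-one splitting
\[
  \energynorm{p(\param)-p_h}{\param}\;\le\;\inf_{s\in H^1_0(\Omega)}\energynorm{p_h-s}{\param}\;+\;\sup_{\energynorm{\varphi}{\param}=1} b(p(\param)-p_h,\varphi;\param)\punkt
\]
The paper imports this wholesale from \cite[Lemma~7.1]{Voh2007}, whereas you prove it directly via the weighted Helmholtz-type decomposition $\gradienth(p(\param)-p_h)=\gradient\psi+r$ in the $(\lambda(\param)\kappa_\eps\,\cdot,\cdot)_{L^2}$ inner product. Your route is self-contained and makes transparent exactly why the constant is $1$ (orthogonality plus Pythagoras), at the cost of having to argue closedness of $G=\gradient H^1_0(\Omega)$ and carry the projection bookkeeping; the paper's citation keeps the proof shorter but hides the mechanism. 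Both arrive at the same inequality, and the remaining steps (norm conversions, Cauchy--Schwarz for the flux term) coincide.
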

\begin{proof}
  We mainly follow the proof of \cite[Lemma 4.1]{ESV2010} while accounting for the parameter dependency of the energy norm and the weak solution.
  It holds for arbitrary $\param\in\Params$, $p\in H^1_0(\Omega)$ and $p_h\in H^1(\triangulation)$, that
  \begin{align}
    \energynorm{p - p_h}{\param}
      \leq\inf\limits_{s \in H^1_0(\Omega)}
        \energynorm{p_h - s}{\param}
      +\sup\limits_{\substack{\varphi \in H^1_0(\Omega)\\ \energynorm{\varphi}{\param} = 1}}
        b(p - p_h, \varphi, \param)
    \label{lemma::proof::error::abstract_energy_norm_estimate::1}
  \end{align}
  (see \cite[Lemma 7.1]{Voh2007}) and for the weak solution $p(\param) \in H^1_0(\Omega)$ of \eqref{equation::problem::weak_solution}, that
  \begin{align}
    b(p(\param) - p_h, \varphi; \param)
      &=\big(f, \varphi \big)_{L^2}
      -\big( \lambda(\param) \kappa_\eps \gradienth p_h, \gradient \varphi \big)_{L^2}
      \komma
    \notag\\
    &=\big(f - \divergence v, \varphi \big)_{L^2}
      -\big( \lambda(\param) \kappa_\eps \mydot \gradienth p_h + v, \gradient \varphi \big)_{L^2}
    \label{lemma::proof::error::abstract_energy_norm_estimate::2}
  \end{align}
  for all $\varphi\in H^1_0(\Omega)$ and all $v \in \Hdiv(\Omega)$, where we used the definition of $b$ in the first equality and the fact that $(v, \gradient \varphi)_{L^2} = -(\divergence v, \varphi)_{L^2}$ due to Green's Theorem and $\varphi\in H^1_0(\Omega)$ in the second one.
  Inserting \eqref{lemma::proof::error::abstract_energy_norm_estimate::2} into \eqref{lemma::proof::error::abstract_energy_norm_estimate::1} with $p = p(\param)$ and using the norm equivalence \eqref{equation::problem::parameter_norm_equivalence} then yields the first inequality in \eqref{equation::error::abstract_energy_norm_estimate}.

  To obtain the second inequality we choose $s = p(\param)$ and $v = -\lambda(\param) \kappa_\eps \gradient p(\param)$ in the right hand side of \eqref{equation::error::abstract_energy_norm_estimate} which eliminates the two infimums and leaves us with two terms yet to be estimated arising inside the supremum.
  Using Green's Theorem and the definition of $b$ we observe the vanishing of the first term.
  We estimate the second term as
  \begin{align}
    \big|\big(
          \lambda(\param) \kappa_\eps \gradienth p_h &- \lambda(\param) \kappa_\eps \gradient p(\param), \gradient\varphi
        \big)_{L^2}\big|
    \notag\\
      &=\big|\big(
          (\lambda(\param) \kappa_\eps)^{1/2} \gradienth(p_h - p(\param)), (\lambda(\param) \kappa_\eps)^{1/2} \gradient\varphi
        \big)_{L^2}\big|
    \notag\\
    &\leq
        \norm{(\lambda(\param) \kappa_\eps)^{1/2} \gradienth(p_h - p(\param))}{L^2}
        \norm{(\lambda(\param) \kappa_\eps)^{1/2} \gradient\varphi}{L^2}
    \notag\\
    &=
        \energynorm{p_h - p(\param)}{\param} \energynorm{\varphi}{\param}\komma
    \notag
  \end{align}
  where we used the Cauchy-Schwarz inequality and the definition of the energy norm.
  We finally obtain the second inequality of \eqref{equation::error::abstract_energy_norm_estimate} from the bound above by observing that the supremum vanishes (due to $\energynorm{\varphi}{\param} = 1$) and by using the norm equivalence \eqref{equation::problem::parameter_norm_equivalence} again.
\end{proof}

The following theorem states the main localization result and gives an indication on how to proceed with the choice of $v$:
it allows us to localize the estimate of the above Lemma, if $v$ fulfills a local conservation property.
It is still an abstract estimate in the sense that it does not use any information of the discretization and does not yet fully prescribe $s$ and $v$.

\begin{theorem}[Locally computable abstract energy norm estimate]
\label{theorem::error::locally_computable_abstract_energy_norm_estimate}
  Let $p(\param) \in H^1_0(\Omega)$ be the weak solution of \eqref{equation::problem::weak_solution} for $\param \in \Params$, let $s \in H^1_0(\Omega)$ and $p_h \in H^1(\triangulation)$ be arbitrary, let $v \in \Hdiv(\Omega)$ fulfill the \emph{local conservation property} $(\nabla \cdot v, \mathds{1})_{L^2, \Element} = (f, \mathds{1})_{L^2, \Element}$ and let $C_P^\Element > 0$ denote the constant from the Poincar\'{e} inequality $\norm{\varphi - \Pi_0^\Element\varphi}{L^2,\Element}^2 \leq C_P^\Element h_\Element^2 \norm{\gradient\varphi}{L^2,\Element}^2$ for all $\varphi\in H^1(\Element)$ on all $\Element\in\Triangulation$, where $\Pi_l^\omega$ denotes the $L^2$-orthogonal projection onto $\Pk_l(\omega)$ for $l \in \N$ and $\omega \subseteq \Omega$.
  It then holds for arbitrary $\paramFixed, \paramHat \in \Params$, that
  \begin{align}
    \energynorm{p(\param) - p_h}{\paramFixed}
      &\leq \tilde{\eta}(p_h, s, v; \paramFixed, \paramHat)\komma
    \notag
  \end{align}
  with the abstract \emph{global estimator} $\tilde{\eta}(p_h, s,v; \paramFixed, \paramHat)$ defined as
  \begin{align}
    \tilde{\eta}(p_h, s,v; \paramFixed, \paramHat)
      := \tfrac{1}{\sqrt{\alphaParam}}
        \Bigg[
          &\;\,\sqrt{\gammaParam} \Big(\sum_{\Element\in\Triangulation} \tilde{\eta}_\text{\textnormal{nc}}^\Element(p_h, s; \paramFixed)^2\Big)^{1/2}
          +\Big(\sum_{\Element\in\Triangulation} \tilde{\eta}_\text{\textnormal{r}}^\Element(v)^2\Big)^{1/2}
    \notag\\
      &+\tfrac{1}{\sqrt{\alphaParamHat}} \Big(\sum_{\Element\in\Triangulation} \tilde{\eta}_\text{\textnormal{df}}^\Element(p_h, v; \paramHat)^2\Big)^{1/2}
      \quad\quad\quad\quad\quad\quad\quad\quad\;\;\Bigg]
    \notag
  \end{align}
  and the local \emph{nonconformity estimator} defined as $\tilde{\eta}_\text{\textnormal{nc}}^\Element(p_h, s; \paramFixed) := \energynorm{p_h - s}{\paramFixed, \Element}$, the local \emph{residual estimator} defined as $\tilde{\eta}_\text{\textnormal{r}}^\Element(v) := ({C_P^\Element}/{c_\eps^\Element})^{1/2} h_\Element \norm{f - \divergence v}{L^2,\Element}$ and the local \emph{diffusive flux estimator} defined as $\tilde{\eta}_\text{\textnormal{df}}^\Element(p_h, v; \paramHat) := \norm{(\lambda(\paramHat) \kappa_\eps )^{-1/2}\big( \lambda(\param) \kappa_\eps \gradienth p_h + v \big)}{L^2, \Element}$ for all coarse elements $\Element\in\Triangulation$, where $c_\eps^\Element := \min_{\element\in\triangulation^\Element} c_\eps^\element$.
\end{theorem}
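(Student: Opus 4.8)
The starting point is the abstract energy norm estimate of Lemma~\ref{lemma::error::abstract_energy_norm_estimate}, evaluated with the fixed parameter $\paramFixed$, which bounds $\energynorm{p(\param) - p_h}{\paramFixed}$ by $\tfrac{1}{\sqrt{\alphaParam}}$ times the sum of an infimum over $s \in H^1_0(\Omega)$ and an infimum over $v \in \Hdiv(\Omega)$. The plan is to drop both infima by inserting the specific $s$ and $v$ provided in the theorem, then to localize the remaining supremum term to the coarse elements. The nonconformity contribution is immediate: the first term $\sqrt{\gammaParam}\,\energynorm{p_h - s}{\paramFixed}$ already splits additively as $\sqrt{\gammaParam}\big(\sum_{\Element\in\Triangulation}\energynorm{p_h - s}{\paramFixed,\Element}^2\big)^{1/2}$ by the definition of the global energy norm, which gives exactly the $\tilde{\eta}_\text{\textnormal{nc}}^\Element$ terms.

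\textbf{Localizing the supremum.} The heart of the argument is the supremum term
\[
  \sup_{\substack{\varphi\in H^1_0(\Omega)\\\energynorm{\varphi}{\param}=1}}
    \Big\{
      \big(f - \divergence v, \varphi\big)_{L^2}
      -\big(\lambda(\param)\kappa_\eps\mydot\gradienth p_h + v, \gradient\varphi\big)_{L^2}
    \Big\}.
\]
First I would split each $L^2$ inner product into a sum of local contributions over the coarse elements $\Element\in\Triangulation$. For the residual term, the local conservation property $(\divergence v, \one)_{L^2,\Element} = (f, \one)_{L^2,\Element}$ lets me subtract the element mean of $\varphi$ without changing the integral: $(f - \divergence v, \varphi)_{L^2,\Element} = (f - \divergence v, \varphi - \Pi_0^\Element\varphi)_{L^2,\Element}$. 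This is the crucial step that turns the full $L^2$ norm of $\varphi$ into a norm of $\varphi - \Pi_0^\Element\varphi$, which the Poincar\'e inequality then controls by $(C_P^\Element)^{1/2} h_\Element \norm{\gradient\varphi}{L^2,\Element}$. Applying Cauchy--Schwarz locally, the residual part is bounded by $\sum_\Element (C_P^\Element/c_\eps^\Element)^{1/2} h_\Element \norm{f - \divergence v}{L^2,\Element}\,\energynorm{\varphi}{\param,\Element}$, where the factor $1/\sqrt{c_\eps^\Element}$ converts the plain $L^2$ gradient norm into the local energy norm (using that $c_\eps^\Element$ bounds $\lambda(\param)\kappa_\eps$ from below on $\Element$). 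This produces precisely $\tilde{\eta}_\text{\textnormal{r}}^\Element(v)$.

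\textbf{The diffusive flux term and final assembly.} For the second inner product, I would insert $(\lambda(\paramHat)\kappa_\eps)^{1/2}(\lambda(\paramHat)\kappa_\eps)^{-1/2}$ as a factored identity and apply Cauchy--Schwarz locally, matching $(\lambda(\paramHat)\kappa_\eps)^{-1/2}(\lambda(\param)\kappa_\eps\gradienth p_h + v)$ against $(\lambda(\paramHat)\kappa_\eps)^{1/2}\gradient\varphi$. The first factor is exactly $\tilde{\eta}_\text{\textnormal{df}}^\Element(p_h, v; \paramHat)$; the second factor is $\energynorm{\varphi}{\paramHat,\Element}$, which the norm equivalence~\eqref{equation::problem::parameter_norm_equivalence} bounds by $\tfrac{1}{\sqrt{\alphaParamHat}}\,\energynorm{\varphi}{\param,\Element}$. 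After bounding every local term by its estimator times $\energynorm{\varphi}{\param,\Element}$, I would apply the discrete Cauchy--Schwarz inequality over $\Element\in\Triangulation$ and use $\big(\sum_\Element\energynorm{\varphi}{\param,\Element}^2\big)^{1/2} = \energynorm{\varphi}{\param} = 1$ to collapse the $\varphi$-dependence. Collecting the three groups of local estimators and the prefactor $\tfrac{1}{\sqrt{\alphaParam}}$ yields the claimed form of $\tilde{\eta}$.

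\textbf{Main obstacle.} The routine parts are the two Cauchy--Schwarz applications; the genuinely delicate point is the careful bookkeeping of \emph{three} distinct parameters --- $\param$ (weak solution and the normalizing energy norm of $\varphi$), $\paramFixed$ (the norm in which the error is measured), and $\paramHat$ (the weighting of the flux reconstruction) --- and ensuring the norm-equivalence factors $\sqrt{\gammaParam}$, $1/\sqrt{\alphaParam}$ and $1/\sqrt{\alphaParamHat}$ land on the correct estimator groups. One must verify that the $\paramHat$-equivalence is applied \emph{locally} so that the per-element factor $1/\sqrt{\alphaParamHat}$ can be pulled out of the sum before the discrete Cauchy--Schwarz step, and that no spurious coupling between the global norm of $\varphi$ and the local estimators is introduced when passing from the element-wise bounds to the global one.
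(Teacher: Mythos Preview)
Your proposal is correct and follows essentially the same route as the paper: start from Lemma~\ref{lemma::error::abstract_energy_norm_estimate}, fix $s$ and $v$, localize the supremum term over $\Triangulation$, use the local conservation property together with the Poincar\'e inequality for the residual term, a weighted Cauchy--Schwarz for the diffusive flux term, and finish with discrete Cauchy--Schwarz over $\Element$ and $\energynorm{\varphi}{\param}=1$. The only cosmetic difference is that the paper applies the $\paramHat$--$\param$ equivalence to the flux factor $(\lambda\kappa_\eps)^{-1/2}(\cdot)$ rather than to $\energynorm{\varphi}{\paramHat,\Element}$ as you do, but both yield the same constant $1/\sqrt{\alphaParamHat}$.
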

\begin{proof}
  We loosely follow the proof of \cite[Theorem 3.1]{ESV2010} while accounting for the parameter dependency and the coarse triangulation.
  Fixing an arbitrary $s\in H^1_0(\Omega)$ in \eqref{equation::error::abstract_energy_norm_estimate} and localizing with respect to the coarse triangulation yields
  \begin{align}
    \energynorm{p(\param) - p_h}{\paramFixed}
      &\leq \tfrac{1}{\sqrt{\alphaParam}}
      \Big(
      \sqrt{\gammaParam}
      \sqrt{\sum_{\Element\in\Triangulation} \energynorm{p_h - s}{\paramFixed, \Element}^2}
    \label{theorem::proof::error::locally_computable_abstract_energy_norm_estimate::1}\\
    &+\sup\limits_{\substack{\varphi\in H^1_0(\Omega)\\ \energynorm{\varphi}{\param}=1}}
      \Big\{
        \sum_{\Element\in\Triangulation}
          \Big(
            \underbrace{\big(
              f - \divergence v, \varphi
            \big)_{L^2,\Element}}_{:=(i)}
            -\underbrace{\big(
              \lambda(\param) \kappa_\eps \mydot\gradienth p_h + v, \gradient\varphi
            \big)_{L^2,\Element}}_{:=(ii)}
          \Big)
      \Big\}
      \Big)
    \notag
  \end{align}
  which leaves us with two local terms we will estimate separately.
  \begin{enumerate}
    \item[$(i)$] Since $(f - \divergence v, \Pi_0^\Element \varphi)_{L^2, \Element} = 0$ due to the local conservation property of $v$ we can estimate the first term as
      \begin{align}
        \big|\big(
              f - \divergence v, \varphi
            \big)_{L^2,\Element} \big|
          &\leq \norm{f - \divergence v}{L^2, \Element} \norm{\varphi - \Pi_0^\Element \varphi}{L^2, \Element}
        \notag\\
          &\leq \sqrt{C_P^\Element} h_\Element \Big(\max_{\element\in\triangulation^\Element} \frac{1}{c_\eps^\element}\Big)^{-1} \norm{f - \divergence v}{L^2,\Element}
            \energynorm{\varphi}{\param, \Element}\komma
        \notag
      \end{align}
      where we used the Cauchy-Schwarz inequality, the Poincar\'{e} inequality and the local norm equivalence \eqref{equation::problem::parameter_norm_equivalence} on all $\element\in\triangulation^\Element$.
    \item[$(ii)$] We estimate the second term as
    \begin{align}
      \big|\big(
            \lambda(\param) \kappa_\eps \gradienth p_h + v, \gradient \varphi
          \big)_{L^2,\Element}\big|
        \leq &\norm{(\lambda(\param) \kappa_\eps)^{-1/2}\big( \lambda(\param) \kappa_\eps \gradienth p_h + v \big)}{L^2, \Element}
          \energynorm{\varphi}{\param, \Element}
      \notag\\
      \leq \sqrt{\alpha(\param, \paramHat)}^{-1} &\norm{(\lambda(\paramHat) \kappa_\eps)^{-1/2}\big( \lambda(\param) \kappa_\eps \gradienth p_h + v \big)}{L^2, \Element}
          \energynorm{\varphi}{\param, \Element}
      \notag
    \end{align}
    using the Cauchy-Schwarz inequality, the definition of the local energy semi-norms and the parameter equivalency from section \ref{section::problem_formulation}.
  \end{enumerate}
  Inserting the last two inequalities in \eqref{theorem::proof::error::locally_computable_abstract_energy_norm_estimate::1} and using the Cauchy-Schwarz inequality yields
  \begin{align}
    \energynorm{p(\param) - p_h}{\paramFixed}
      &\leq \tfrac{1}{\sqrt{\alphaParam}}
      \Big( \sqrt{\gammaParam} \Big(\sum_{\Element\in\Triangulation} \tilde{\eta}_\text{nc}^\Element(p_h, s; \paramFixed)^2\Big)^{1/2}
    \notag\\
      &+\sup\limits_{\substack{\varphi\in H^1_0(\Omega)\\\energynorm{\varphi}{\param}=1}}
        \Big[
          \underbrace{\sum_{\Element\in\Triangulation}\big(
              \tilde{\eta}_\text{r}^\Element(v)
              + \tfrac{1}{\sqrt{\alphaParamHat}} \; \tilde{\eta}_\text{df}^\Element(p_h, v; \paramHat)
            \big)
            \energynorm{\varphi}{\param, \Element}}_{(iii)}
        \Big]^2\Big)\komma
    \notag
  \end{align}
  using the definition of the local estimators and of $c_\eps^\Element$.
  Using the Cauchy-Schwarz inequality again we can further estimate $(iii)$ as
  \begin{align}
      (iii) \leq \Bigg[
          \Big( \sum_{\Element\in\Triangulation} \tilde{\eta}_\text{r}^\Element(v)^2 \Big)^{1/2}
          + \tfrac{1}{\sqrt{\alphaParamHat}} \Big( \sum_{\Element\in\Triangulation} \tilde{\eta}_\text{df}^\Element(p_h, v; \paramHat)^2 \Big)^{1/2}
        \Bigg] \energynorm{\varphi}{\param}\punkt
    \notag
  \end{align}
  The previous two inequalities combined give the final result, since the supremum vanishes due to $\energynorm{\varphi}{\param} = 1$.
\end{proof}

\begin{remark}[Properties of the locally computable abstract energy norm estimate]
  In contrast to the estimator proposed in \cite{ESV2010} the above estimate is local with respect to $\Triangulation$, not $\triangulation$.
  Choosing $\Triangulation = \triangulation$ we obtain nearly the same estimate as the one in \cite{ESV2010} for the pure diffusion case (apart from a slightly less favorable summation).
  In general, however, we can only expect $\tilde{\eta}_\text{\textnormal{r}}$ to be superconvergent if we refine $\Triangulation$ along with $\triangulation$ (see Section \ref{subsection::experiments_convergence}), thus keeping the ration $H/h$ fixed.
\end{remark}

What is left now in order to turn the abstract estimate of Theorem \ref{theorem::error::locally_computable_abstract_energy_norm_estimate} into a fully computable one is to specify $s$ and $v$, given a DG solution $p_h$.
We will do so in the following paragraphs, finally using the knowledge that $p_h$ was computed using our DG discretization.

\textbf{Oswald interpolation.} The form of the nonconformity indicator in Theorem \ref{theorem::error::locally_computable_abstract_energy_norm_estimate} already indicates how to choose $s$: it should be close to $p_h$, in order to minimize $\tilde{\eta}_\text{nc}$, and it should be computable with reasonable effort.
Both requirements are met by the Oswald interpolation operator, that goes back to \cite{KP2003} (in the context of a-posteriori error estimates; see also \cite[Section 2.5]{ESV2010} and the references therein).
Given any nonconforming approximation $p_h \in Q_h^k(\Triangulation) \not\subset H^1_0(\Omega)$ we choose $s \in H^1_0(\Omega)$ as a conforming reconstruction of $p_h$ by the \emph{Oswald interpolation operator} $I_\text{os} : Q_h^k(\Triangulation) \to Q_h^k(\Triangulation) \cap H^1_0(\Omega)$ which is defined by prescribing its values on the Lagrange nodes $\vertex$ of the triangulation:
we set $I_\text{os}[p_h](\vertex) := p_h^\element(\vertex)$ inside any $\element\in\triangulation$ and
\begin{align}
  I_\text{os}[p_h](\vertex) := \tfrac{1}{|\triangulation^v|} \sum_{\element \in \triangulation^\vertex} p_h^\element(\vertex)
    &&\text{for all inner nodes of } \triangulation \text{ and}&& I_\text{os}[p_h](\vertex) := 0
  \notag
\end{align}
for all boundary nodes of $\triangulation$, where $\triangulation^v \subset \triangulation$ denotes the set of all simplices of the fine triangulation which share $\vertex$ as a node.

We continue with the specification of $v$, which is a bit more involved.
The only formal requirement we have is for $v$ to fulfill the local conservation property on each coarse element, although the diffusive flux estimator already gives a good hint on the specific form of $v$ (namely, to be close to $-\lambda(\param) \kappa_\eps \gradienth p_h(\param)$).
A particular choice is given by the element-wise diffusive flux reconstruction (with respect to the fine triangulation) that was proposed in \cite{ESV2010}, which fulfills the local conservation property on the coarse elements if properly defined with respect to our DG discretization.

\textbf{Diffusive flux reconstruction.} We will reconstruct a conforming diffusive flux approximation $u_h(\param) \in \Hdiv(\Omega)$ of the nonconforming discrete diffusive flux $-\lambda(\param) \kappa_\eps \gradienth p_h(\param) \not\in \Hdiv(\Omega)$ in a conforming discrete subspace $V_h^l(\triangulation) \subset \Hdiv(\Omega)$, namely the \emph{Raviart-Thomas-N\'{e}d\'{e}lec} space of vector functions (see \cite{ESV2010} and the references therein), defined for $k - 1 \leq l \leq k$ by
\begin{align}
  V_h^l(\triangulation)
    :=\big\{
        v \in \Hdiv(\Omega)
      \big|
        \restrict{v}{\element} \in [\Pk_l(\element)]^d + \boldsymbol{x} \Pk_l(\element)
        \quad\forall\element\in\triangulation
      \big\}\punkt
  \notag
\end{align}
See \cite[Section 2.4]{ESV2010} and the references therein  for a detailed discussion of the role of the polynomial degree $l$, the properties of elements of $V_h^l(\triangulation)$ and the origin of the use of diffusive flux reconstructions in the context of error estimation in general.
We define the \emph{diffusive flux reconstruction} operator $R_h^l: \Params \to [Q_h^k(\Triangulation) \to V_h^l(\triangulation)]$, $\param \mapsto \big[ q_h \mapsto R_h^l[q_h; \param]\big]$, by locally specifying $R_h^l[q_h; \param] \in V_h^l(\triangulation)$, such that
\begin{align}
  \big(
        R_h^l[q_h; \param] \mydot n_\face, q
      \big)_{L^2, \face}
      &= b_c^\face(q_h, q; \param) + b_p^\face(q_h, q; \param)
    &&\text{for all } q \in \Pk_l(\face)
  \label{equation::error::flux_reconstruction::1}
\intertext{and all $\face \in \faces^\element$ and}
  \big(
        R_h^l[q_h; \param], \gradient q
      \big)_{L^2, \element}
      &= -b^\element(q_h, q; \param)
        -\vartheta\sum_{\face \in \faces^\element}
          b_c^\face(q, q_h; \param)
    &&\text{for all } \gradient q \in [\Pk_{l-1}(\element)]^d
  \label{equation::error::flux_reconstruction::2}
\end{align}
with $q \in \Pk_l(\element)$ for all $\element \in \triangulation$, where $\vartheta$ is given by the local discretization inside each coarse element and by $\vartheta = 1$ on all fine faces that lie on a coarse face.
The next Lemma shows that this reconstruction of the diffusive flux is sensible for the DG solution as well as the reduced solution, since the reconstructions of both fulfill the requirements of Theorem \ref{theorem::error::locally_computable_abstract_energy_norm_estimate}.

\begin{lemma}[Local conservativity]
  \label{lemma::error::local_conservativity}
  Let $\one \in Q_\red^\Element \subset Q_h^{k, \Element}$ for all $\Element \in \Triangulation$ and let $p_h(\param) \in Q_h^k(\Triangulation)$ be the DG solution of \eqref{equation::problem::discrete_solution} and $p_\red(\param) \in Q_\red(\Triangulation)$ the reduced solution of \eqref{equation::problem::reduced_solution} for a parameter $\param \in \Params$ and let $u_h(\param) := R_h^l[p_h(\param); \param] \in V_h^l(\triangulation)$ and $u_\red(\param) := R_h^l[p_\red(\param); \param] \in V_h^l(\triangulation)$ denote their respective diffusive flux reconstructions.
  It then holds that $u_h(\param)$ and $u_\red(\param)$ fulfill the local conservation property of Theorem \ref{theorem::error::locally_computable_abstract_energy_norm_estimate}, i.e.,
  \begin{align}
    \big( \divergence u_h(\param), \one \big)_{L^2, \Element} = \big( f, \one \big)_{L^2, \Element} = \big( \divergence u_\red(\param), \one \big)_{L^2, \Element}
      &&\text{for all } \Element \in \Triangulation.
      \notag
  \end{align}
\end{lemma}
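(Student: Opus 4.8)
The plan is to prove the local conservation property directly from the defining relations \eqref{equation::error::flux_reconstruction::1} and \eqref{equation::error::flux_reconstruction::2} of the reconstruction operator $R_h^l$, testing with the constant function $\one$ restricted to each coarse element. The key observation is that $\big(\divergence v, \one\big)_{L^2, \Element}$ can be rewritten, via the divergence theorem applied on each fine element $\element \in \triangulation^\Element$ and summed, as a combination of element integrals of $\divergence v$ and face integrals of $v \mydot n_\face$. First I would fix $\Element \in \Triangulation$ and write
\begin{align}
  \big( \divergence u_h(\param), \one \big)_{L^2, \Element}
    = \sum_{\element \in \triangulation^\Element} \big( \divergence u_h(\param), \one \big)_{L^2, \element}
    = \sum_{\element \in \triangulation^\Element} \big( R_h^l[p_h(\param);\param] \mydot n_\element, \one \big)_{L^2, \boundary{\element}}\komma
  \notag
\end{align}
using Green's theorem on each fine element (here $n_\element$ is the outward normal of $\element$). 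Because $\one \in \Pk_0(\face) \subset \Pk_l(\face)$ for every face, the boundary terms are exactly the quantities controlled by \eqref{equation::error::flux_reconstruction::1}.

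Next I would substitute the face relation \eqref{equation::error::flux_reconstruction::1} with $q = \one$ into each boundary integral, giving $\sum_{\face \in \faces^\element} \pm\big(b_c^\face(p_h, \one; \param) + b_p^\face(p_h, \one; \param)\big)$. The penalty contributions $b_p^\face$ involve $\jump{\one}_\face$, which vanishes on interior faces and equals $\one$ on boundary faces; I would check that these cancel or reduce appropriately. More importantly, when summing over all $\element \in \triangulation^\Element$, each interior fine face $\face \in \innerfaces{h}^\Element$ is shared by two elements whose outward normals are opposite, so the $b_c^\face$ contributions from the two sides cancel. The only surviving boundary terms are those on $\boundaryfaces{h}^\Element$, the fine faces on the boundary of the coarse element, which are precisely the coupling and penalty fluxes that the global SWIPDG coupling $b_h^\Face$ assembles. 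Simultaneously, I would use the element relation \eqref{equation::error::flux_reconstruction::2} with $q = \one$: since $\gradient \one = 0$, the left-hand side $\big(R_h^l[p_h;\param], \gradient \one\big)_{L^2, \element}$ vanishes, forcing $b^\element(p_h, \one; \param) + \vartheta \sum_{\face} b_c^\face(\one, p_h; \param) = 0$; this links the volume flux divergence to the remaining coupling terms.

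The heart of the argument is then to recognize that the collected surviving terms reproduce exactly the left-hand side of the discrete equation \eqref{equation::problem::discrete_solution} tested against the function that equals $\one$ on $\Element$ and $0$ elsewhere. Since $\one \in Q_h^{k,\Element}$ (it lies in both $S_h^k$ and $Q_h^k(\triangulation^\Element)$), this indicator function is a legitimate test function in $Q_h^1$, so \eqref{equation::problem::discrete_solution} yields $b_h(p_h(\param), \chi_\Element; \param) = l(\chi_\Element) = (f, \one)_{L^2, \Element}$, which matches $\big(\divergence u_h(\param), \one\big)_{L^2, \Element}$ after the bookkeeping above. For the reduced solution $u_\red(\param)$ the identical reasoning applies, but here the hypothesis $\one \in Q_\red^\Element$ is exactly what is needed: it guarantees that the coarse-element indicator function lies in $Q_\red(\Triangulation)$, so that \eqref{equation::problem::reduced_solution} may be tested against it to give $(f, \one)_{L^2, \Element}$.

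The main obstacle I anticipate is the careful sign and cancellation bookkeeping of the coupling terms $b_c^\face$ and penalty terms $b_p^\face$ across the interior and boundary fine faces of $\Element$, especially reconciling the asymmetry introduced by $\vartheta$ in \eqref{equation::error::flux_reconstruction::2} with the symmetric appearance of $b_c^\face(q, p_h;\param)$ versus $b_c^\face(p_h, q; \param)$ in the bilinear forms \eqref{equation::problem::local_generalized_IPDG_bilinearform} and \eqref{equation::problem::global_bilinear_form}. The definitions of $R_h^l$ in \eqref{equation::error::flux_reconstruction::1}--\eqref{equation::error::flux_reconstruction::2} are precisely engineered so that summing the face and volume relations reconstructs $b_h^\Element(p_h, \one; \param) + \sum_{\Face \in \Faces^\Element} b_h^\Face(p_h, \one; \param)$, i.e., the full contribution of $\Element$ to $b_h$; verifying this reconstruction term-by-term, with the correct treatment of $\vartheta$ on the coarse faces (where $\vartheta = 1$) versus inside the coarse element, is where the technical care is required.
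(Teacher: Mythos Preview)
Your approach is essentially the same as the paper's: apply Green's theorem on each fine element, invoke the defining relations \eqref{equation::error::flux_reconstruction::1}--\eqref{equation::error::flux_reconstruction::2} of $R_h^l$, recognize the result as $b_h(p_*(\param), \one^\Element; \param)$ for the coarse-element indicator $\one^\Element$, and conclude via \eqref{equation::problem::discrete_solution} (respectively \eqref{equation::problem::reduced_solution}, using $\one \in Q_\red^\Element$). One minor remark: your invocation of \eqref{equation::error::flux_reconstruction::2} with $q = \one$ is vacuous, since both $b^\element(p_h, \one; \param)$ and $b_c^\face(\one, p_h; \param)$ already vanish because $\gradient \one = 0$; all the content comes from \eqref{equation::error::flux_reconstruction::1} and the $H_\text{div}$-conformity of $u_*$, which makes the normal trace single-valued and forces the interior-face cancellations you describe.
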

\begin{proof}
  We follow the ideas of \cite[Lemma 2.1]{ESV2010} while accounting for the coarse triangulation.
  Let $\one^\Element \in Q_h^k(\Triangulation)$ be an indicator for $\Element\in\Triangulation$, such that $\restrict{\one^\Element}{\Element} = \one \in Q_h^{k, \Element}$ and zero everywhere else.
  It then holds with $* = h, \red$, that
  \begin{align}
    \big( \divergence u_*(\param), \one \big)_{L^2, \Element}
      &= \sum_{\element \in \triangulation^\Element}
        \big[
          \big( u_*(\param) \mydot n, \one \big)_{L^2, \boundary\element}
          -\big( u_*(\param), \nabla \one )\big)_{L^2, \element}
        \big]
    \notag\\
      &= b_h(p_*(\param), \one^\Element; \param)
      = \big(f, \one)_{L^2, \Element}
    \notag\komma
  \end{align}
  for all $\Element\in\Triangulation$, where we used Green's Theorem in the first equality, the definition of the diffusive flux reconstruction, \eqref{equation::error::flux_reconstruction::1}, \eqref{equation::error::flux_reconstruction::2}, and the definition of $\one^\Element$ and $b_h$ in the second and the fact, that $\one \in Q_h^{k, \Element}$ and $p_*$ solves \eqref{equation::problem::discrete_solution} or \eqref{equation::problem::reduced_solution}, respectively, in the third.
\end{proof}

Inserting the Oswald interpolation for $s$ and the diffusive flux reconstruction for $v$ in Theorem \ref{theorem::error::locally_computable_abstract_energy_norm_estimate} then yields a locally computable energy estimate for the DG as well as the reduced solution.

\begin{corollary}[Locally computable energy norm estimate]
  \label{corollary::error::locally_computable_energy_norm_estimate}
  Let $p(\param) \in H^1_0(\Omega)$ be the weak solution of \eqref{equation::problem::weak_solution}, let $p_h(\param) \in Q_h^1(\Triangulation)$ be the DG solution of \eqref{equation::problem::discrete_solution}, let $p_\red(\param) \in Q_\red(\Triangulation)$ be the reduced solution of \eqref{equation::problem::reduced_solution} and let $R_h^l$ denote the diffusive flux reconstruction operator.
  Let the assumptions of Theorem \eqref{theorem::error::locally_computable_abstract_energy_norm_estimate} and Lemma \eqref{lemma::error::local_conservativity} be fulfilled and let $\paramFixed, \paramHat \in \Params$ be arbitrary.
  It then holds, that
  \begin{align}
    \energynorm{p(\param) - p_h(\param)}{\paramFixed} &\leq \eta(p_h(\param); \param, \paramFixed, \paramHat)\komma
    \notag\\
    \energynorm{p(\param) - p_\red(\param)}{\paramFixed} &\leq \eta(p_\red(\param); \param, \paramFixed, \paramHat).
    \notag
  \end{align}
  with
  \begin{align}
    \eta(\cdot; \param, \paramFixed, \paramHat)
      := \tfrac{1}{\sqrt{\alphaParam}}
        \Bigg[
          &\;\,\sqrt{\gammaParam} \Big(\sum_{\Element\in\Triangulation} \eta_\text{\textnormal{nc}}^\Element(\cdot; \paramFixed)^2\Big)^{1/2}
          +\Big(\sum_{\Element\in\Triangulation} \eta_\text{\textnormal{r}}^\Element(\cdot; \param)^2\Big)^{1/2}
    \notag\\
      &+\tfrac{1}{\sqrt{\alphaParamHat}} \Big(\sum_{\Element\in\Triangulation} \eta_\text{\textnormal{df}}^\Element(\cdot; \param, \paramHat)^2\Big)^{1/2}
      \quad\quad\quad\quad\quad\quad\quad\quad\;\Bigg]
    \notag
  \end{align}
  and
  \begin{align}
    \eta_\text{\textnormal{nc}}^\Element(\cdot; \paramFixed) := \tilde{\eta}_\text{\textnormal{nc}}^\Element(\cdot, I_\text{os}[\cdot]; \paramFixed)\komma
      &&\eta_\text{\textnormal{r}}^\Element(\cdot; \param) := \tilde{\eta}_\text{\textnormal{r}}^\Element(R_h^l[\cdot; \param])\komma
      &&\eta_\text{\textnormal{df}}^\Element(\cdot; \param, \paramHat) := \tilde{\eta}_\text{\textnormal{df}}^\Element(\cdot, R_h^l[\cdot; \param]; \paramHat)\punkt
    \notag
  \end{align}
\end{corollary}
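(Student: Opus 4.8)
The plan is to obtain both estimates as a direct specialization of the abstract bound in Theorem \ref{theorem::error::locally_computable_abstract_energy_norm_estimate}, with the two free objects $s$ and $v$ fixed by the Oswald interpolation and the diffusive flux reconstruction, respectively. Since the argument is identical for the DG solution and the reduced solution, I would carry it out once for a generic $p_*(\param) \in Q_h^k(\Triangulation)$ with $* \in \{h, \red\}$ and specialize only at the very end.

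First I would check the admissibility of the two choices for the free arguments $s \in H^1_0(\Omega)$ and $v \in \Hdiv(\Omega)$ of Theorem \ref{theorem::error::locally_computable_abstract_energy_norm_estimate}. For the nonconformity contribution I set $s := I_\text{os}[p_*(\param)]$; by construction of the Oswald operator (it reproduces interior Lagrange-node values as averages and forces the boundary nodes to zero) we have $I_\text{os}[p_*(\param)] \in Q_h^k(\Triangulation) \cap H^1_0(\Omega) \subset H^1_0(\Omega)$, so $s$ is admissible. For the residual and diffusive-flux contributions I set $v := R_h^l[p_*(\param); \param]$, which lies in $V_h^l(\triangulation) \subset \Hdiv(\Omega)$ by definition of the reconstruction operator. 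The only nontrivial hypothesis of the Theorem on $v$ is the local conservation property $(\divergence v, \one)_{L^2, \Element} = (f, \one)_{L^2, \Element}$ on every coarse element $\Element \in \Triangulation$; this is exactly the content of Lemma \ref{lemma::error::local_conservativity}, which establishes it simultaneously for $u_h(\param) = R_h^l[p_h(\param); \param]$ and $u_\red(\param) = R_h^l[p_\red(\param); \param]$ under the standing assumption $\one \in Q_\red^\Element \subset Q_h^{k, \Element}$.

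With both admissibility checks settled, and noting $p_*(\param) \in Q_h^k(\Triangulation) \subset H^1(\triangulation)$, I would invoke Theorem \ref{theorem::error::locally_computable_abstract_energy_norm_estimate} with its arbitrary $p_h$ taken to be $p_*(\param)$, its $s$ taken to be $I_\text{os}[p_*(\param)]$ and its $v$ taken to be $R_h^l[p_*(\param); \param]$. This gives $\energynorm{p(\param) - p_*(\param)}{\paramFixed} \leq \tilde{\eta}(p_*(\param), I_\text{os}[p_*(\param)], R_h^l[p_*(\param); \param]; \paramFixed, \paramHat)$. It then remains only to read off that the right-hand side coincides with $\eta(p_*(\param); \param, \paramFixed, \paramHat)$: the local estimators of the Corollary are defined precisely as the abstract ones evaluated at these choices, namely $\eta_\text{nc}^\Element(\cdot; \paramFixed) = \tilde{\eta}_\text{nc}^\Element(\cdot, I_\text{os}[\cdot]; \paramFixed)$, $\eta_\text{r}^\Element(\cdot; \param) = \tilde{\eta}_\text{r}^\Element(R_h^l[\cdot; \param])$ and $\eta_\text{df}^\Element(\cdot; \param, \paramHat) = \tilde{\eta}_\text{df}^\Element(\cdot, R_h^l[\cdot; \param]; \paramHat)$, while the global $\eta$ assembles them with the same prefactors and summation as $\tilde{\eta}$. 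Specializing to $* = h$ and to $* = \red$ then yields the two asserted inequalities.

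I would expect no genuine mathematical obstacle, since the Corollary is essentially a bookkeeping composition of Theorem \ref{theorem::error::locally_computable_abstract_energy_norm_estimate} and Lemma \ref{lemma::error::local_conservativity}. The only points deserving care are the two admissibility verifications: confirming that the Oswald reconstruction indeed lands in $H^1_0(\Omega)$, so that it is a legitimate choice of $s$ (this is where forcing the boundary nodes to zero is needed), and confirming that the flux reconstruction satisfies the local conservation property on the coarse triangulation $\Triangulation$ rather than on the fine one $\triangulation$. Both are delivered by the constructions preceding the Corollary, so the residual effort is purely notational --- matching the component-wise definitions of $\eta$ to those of $\tilde{\eta}$.
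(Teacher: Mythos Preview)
Your proposal is correct and takes essentially the same approach as the paper: the Corollary is stated without proof, preceded only by the remark that inserting the Oswald interpolation for $s$ and the diffusive flux reconstruction for $v$ into Theorem~\ref{theorem::error::locally_computable_abstract_energy_norm_estimate} (together with Lemma~\ref{lemma::error::local_conservativity} for the local conservation property) yields the estimate. Your admissibility checks and the identification of $\eta$ with $\tilde{\eta}$ at these choices are exactly the content being invoked.
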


\textbf{Local efficiency.} The global efficiency of the abstract estimate was already shown in Lemma \ref{lemma::error::abstract_energy_norm_estimate} (again note, that $\gammaParam = \alphaParam = 1$ if $\param$ and $\paramFixed$ coincide); see \cite[Remarks 4.2 and 4.3]{ESV2007} for a discussion of the global efficiency of the abstract estimate.
We also state a local efficiency of the local estimates using the knowledge of the discretization, the Oswald interpolation and the diffusive flux reconstruction.
To do so we further localize our estimates with respect to the fine triangulation and apply the ideas of \cite{ES2008,ESV2007,ESV2010}.
We denote by $\lesssim$ a proportionality relation between to quantities $a$ and $b$ in the sense that $a \lesssim b :\Longleftrightarrow a \leq Cb$, where the positive constant $C$ depends on the space dimension, the polynomial degree $k$, the polynomial degree of $f$, the shape-regularity of $\triangulation$ and the DG parameters $\sigma$ and $\vartheta$, only.
We additionally denote the set of all fine elements that touch $\Element \in \Triangulation$ by $\tildetriangulation^\Element := \{ \element \in \triangulation \,|\, \element \cap \Element \neq \emptyset \}$, the set of all fine faces that touch $\Element$ by $\tildefaces^\Element := \{ \face \in \faces \,|\, \exists \element \in \triangulation^\Element : \face \cap \element \neq \emptyset \}$ and the weighted jump seminorms $\jumpnorm{\cdot}{\cdot, \mathcal{F}} : \Params \to [H^1(\triangulation) \to \R]$, $\param \mapsto [q \mapsto \jumpnorm{q}{\param, \mathcal{F}}]$ and $\jumpnorm{\cdot}{p, \cdot, \mathcal{F}} : \Params \to [H^1(\triangulation) \to \R]$, $\param \mapsto [q \mapsto \jumpnorm{q}{p, \param, \mathcal{F}}]$ for any subset $\mathcal{F} \subset \faces$, all $\param \in \Params$ and $q \in H^1(\triangulation)$ by
\begin{align}
  \jumpnorm{q}{\param, \mathcal{F}} := \Big( \sum_{\face \in \mathcal{F}} \norm{\jump{\big( \lambda(\param) \kappa_\eps \gradient q \big) \mydot n_\face}}{L^2, \face} \Big)^{1/2}
    \;\;\text{and}\quad \jumpnorm{q}{p, \param, \mathcal{F}} := \Big( \sum_{\face \in \mathcal{F}} b_p^\face(q, q; \param) \Big)^{1/2} \komma
  \notag
\end{align}
respectively.
We analogously denote the set of all fine elements that touch $\element \in \triangulation$  by $\tildetriangulation^\element$ and the set of all fine faces that touch $\element$ by $\tildefaces^\element$ and define
\begin{align}
  \tilde{c}_\eps^\Element &:= \min_{\element \in \tildetriangulation^\Element} c_\eps^\element \komma
    &&& \overline{h_\Element} &:= \max_{\element \in \triangulation^\Element} h_t \komma
    &&& \overline{\omega}^\Element &:= \big( \max_{\face \in \faces^\Element} {\omega_\face^+}^2 \big)^{1/2} \komma
  \notag\\
  C_\eps^\Element &:= \max_{\element \in \triangulation} C_\eps^\element \komma
    &&& \underline{h_\Element} &:= \min_{\element \in \triangulation^\Element} h_t \komma
    &&& \overline{C_p^\Element} &:= \max_{\element \in \triangulation} C_p^\element \komma
  \notag\\
  \overline{\underline{C}}_\eps^\Element &:=
    \max_{\element \in \triangulation^\Element}
      \big(
        (\max_{\neighbor \in \element \cup \Neighbors(t)} \tfrac{C_\eps^\neighbor}{c_\eps^\neighbor})^2
      \big)
  \hnS\hnS\hnS\hnS\hnS\hnS\hnS\hnS\hnS\hnS\hnS\hnS\hnS\hnS\hnS\hnS\hnS\hnS\hnS\hnS
  \notag
\end{align}
for all $\Element \in \Triangulation$, where $C_p^\element > 0$ denotes the Poincar\'{e} constant on $\element \in \triangulation$, defined analogously to $C_p^\Element$.

\begin{theorem}[Local efficiency of the locally computable energy norm estimate]
  With the notation and assumptions from Corollary \ref{corollary::error::locally_computable_energy_norm_estimate}, let $f$ be polynomial and $\max_{\element \in \triangulation} h_\element \leq 1$.
  It then holds with $* = h, \red$, respectively, that
  \begin{align}
    \eta_\text{\textnormal{nc}}^\Element(p_*(\param); \paramFixed)
      &\lesssim \big({C_\eps^\Element}/{\tilde{c}_\eps^\Element}\big)^{1/2} \;\, \jumpnorm{p(\param) - p_*(\param)}{p, \paramFixed, \tildefaces^\Element}
    \komma\notag\\
    \eta_\text{\textnormal{r}}^\Element(p_*(\param); \param)
      &\lesssim \sqrt{\gammaParam} ({C_p^\Element} / c_\eps^\Element)^{1/2} h_\Element \Big[ \quad\quad\quad\quad\quad
        C_\eps^\Element \underline{h_\Element}^{-1} \;\, \energynorm{p(\param) - p_*(\param)}{\paramFixed, \Element}
    \notag\\
      &\quad\quad\quad\quad\quad\quad\quad\quad+
        \overline{C_p^\Element}^{1/2} {c_\eps^\Element}^{-1/2}
          \Big(\quad\;\;\,
            \overline{\omega}^\Element \; \overline{h_\Element}^{1/2} \, \jumpnorm{p_*(\param)}{\paramFixed, \faces^\Element}
    \notag\\
      &\quad\quad\quad\quad\quad\quad\quad\quad\quad\quad\quad\quad\quad\quad\quad
            +{C_\eps^\Element}^{1/2}\, \sigma^{1/2} \, \jumpnorm{p(\param) - p_*(\param)}{p, \paramFixed, \faces^\Element}
          \Big) \Big]
    \notag\\
    \eta_\text{\textnormal{df}}^\Element(p_*(\param); \param, \paramHat)
      &\lesssim
      \sqrt{\gammaParamHat}\sqrt{\gammaParam}\; {\overline{\underline{C}}_\eps^\Element}^{1/2}
        \Big(\;\;\;
          \energynorm{p(\param) - p_*(\param)}{\paramFixed, \Element}
    \notag\\
      &\quad\quad\quad\quad\quad\quad\quad\quad\quad\quad\quad\quad\;\;\,
          + \jumpnorm{p(\param) - p_*(\param)}{p, \paramFixed, \faces^\Element}
        \Big)
    \notag
  \end{align}
  for all coarse elements $\Element \in \Triangulation$.
\end{theorem}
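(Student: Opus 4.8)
The plan is to follow the local-efficiency machinery of \cite{ES2008,ESV2007,ESV2010}, treating the three indicators separately and exploiting throughout that the weak solution $p(\param) \in H^1_0(\Omega)$ is conforming: its value jumps $\jump{p(\param)}_\face$ vanish on every fine face, and since $-\divergence(\lambda(\param)\kappa_\eps\gradient p(\param)) = f \in L^2(\Omega)$ the exact flux lies in $\Hdiv(\Omega)$, so the normal-flux jumps $\jump{(\lambda(\param)\kappa_\eps\gradient p(\param))\mydot n_\face}$ vanish as well. Consequently every jump of $p_*(\param)$ appearing below equals the corresponding jump of the error $p_*(\param) - p(\param)$, which is what turns the computable jump seminorms into genuine error quantities. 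In each case I would first localize to the fine triangulation $\triangulation^\Element$ (respectively the patch $\tildetriangulation^\Element$) and then invoke inverse, trace and Poincar\'{e} inequalities together with bubble-function arguments on the fine elements and faces.

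For the nonconformity indicator $\eta_\text{nc}^\Element(p_*(\param); \paramFixed) = \energynorm{p_*(\param) - I_\text{os}[p_*(\param)]}{\paramFixed, \Element}$ I would use the approximation property of the Oswald interpolation established in \cite{KP2003} (see also \cite[Section 2.5]{ESV2010}): since $I_\text{os}$ only modifies $p_*(\param)$ through averaging at interelement nodes, the broken-gradient difference is controlled by the value jumps of $p_*(\param)$ across $\tildefaces^\Element$, scaled by $h_\face^{-1}$. Bounding $\lambda(\paramFixed)\kappa_\eps$ from above and below by $C_\eps^\Element$ and $\tilde{c}_\eps^\Element$ on the patch and comparing the resulting $h_\face^{-1}$-weighted value jumps with the penalty seminorm $\jumpnorm{\cdot}{p, \paramFixed, \tildefaces^\Element}$ (whose weight $\sigma_\face(\paramFixed)$ also scales like $h_\face^{-1}$) yields the stated bound, after replacing $p_*(\param)$ by $p_*(\param) - p(\param)$ in the jump as noted above.

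For the diffusive-flux indicator I would write the argument of the norm as $\lambda(\param)\kappa_\eps\gradienth p_*(\param) + R_h^l[p_*(\param);\param] = -\lambda(\param)\kappa_\eps\gradienth(p(\param) - p_*(\param)) + \big(R_h^l[p_*(\param);\param] + \lambda(\param)\kappa_\eps\gradient p(\param)\big)$. The first summand produces the energy-norm term $\energynorm{p(\param) - p_*(\param)}{\paramFixed,\Element}$ directly, while the second is the deviation of the conforming reconstruction from the exact flux, which I would estimate via the element- and face-local defining relations \eqref{equation::error::flux_reconstruction::1}--\eqref{equation::error::flux_reconstruction::2} of $R_h^l$ and the standard approximation and stability properties of the Raviart--Thomas--N\'{e}d\'{e}lec space; the face terms $b_c^\face$ and $b_p^\face$ entering the reconstruction contribute the penalty jump $\jumpnorm{p(\param) - p_*(\param)}{p,\paramFixed,\faces^\Element}$. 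The parameter equivalences \eqref{equation::problem::parameter_norm_equivalence} convert between the $\param$-, $\paramFixed$- and $\paramHat$-weighted norms and account for the $\sqrt{\gammaParam}$ and $\sqrt{\gammaParamHat}$ factors, and the combined spectral ratio across each element and its neighbours yields ${\overline{\underline{C}}_\eps^\Element}^{1/2}$.

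The residual indicator $\eta_\text{r}^\Element$, which bounds $h_\Element\norm{f - \divergence R_h^l[p_*(\param);\param]}{L^2,\Element}$, is the technically heaviest step and I expect it to be the main obstacle. Since $f$ is polynomial and $\divergence R_h^l[p_*(\param);\param]$ is polynomial, the residual is polynomial and I would test it against the elementwise bubble function, using an inverse inequality to pass from the $L^2$-norm of the residual to the discrete residual of $p_*(\param)$; this inverse inequality is the origin of the factor $\underline{h_\Element}^{-1}$. Integrating by parts over each fine element and inserting the weak problem \eqref{equation::problem::weak_solution} splits the discrete residual into a volume part controlled by $\gradienth(p(\param) - p_*(\param))$ -- giving the term $C_\eps^\Element\underline{h_\Element}^{-1}\energynorm{p(\param) - p_*(\param)}{\paramFixed,\Element}$ -- and boundary parts controlled by the normal-flux jumps of $p_*(\param)$ and the penalty jumps, producing the $\overline{\omega}^\Element\,\overline{h_\Element}^{1/2}\jumpnorm{p_*(\param)}{\paramFixed,\faces^\Element}$ and ${C_\eps^\Element}^{1/2}\sigma^{1/2}\jumpnorm{p(\param) - p_*(\param)}{p,\paramFixed,\faces^\Element}$ contributions. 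The delicate points are the careful bookkeeping of the mesh weights $h_\Element$, $\overline{h_\Element}$ and $\underline{h_\Element}$ through the inverse and trace inequalities, the correct distribution of the SWIPDG averaging weights $\overline{\omega}^\Element$ arising from $\mean{\cdot}_\face$, and the uniform control of all heterogeneity and parameter constants so that the final estimate remains robust with respect to $\eps$.
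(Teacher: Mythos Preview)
Your proposal is correct and follows essentially the same route as the paper: treat the three indicators separately, localize to the fine cells in $\triangulation^\Element$ (resp.\ the patch $\tildetriangulation^\Element$), invoke the Oswald-interpolation jump bound for $\eta_\text{nc}^\Element$, the bubble/inverse machinery of \cite{ES2008} and the flux-reconstruction analysis of \cite{ESV2007,ESV2010} for $\eta_\text{r}^\Element$ and $\eta_\text{df}^\Element$, and close with the parameter equivalence \eqref{equation::problem::parameter_norm_equivalence}.

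The only organizational difference worth flagging concerns $\eta_\text{r}^\Element$. The paper does not run the bubble argument on $f-\divergence R_h^l[p_*;\param]$ directly; instead it first applies the triangle inequality
\[
  \norm{f-\divergence R_h^l[p_*;\param]}{L^2,\Element}
  \;\leq\;
  \norm{f-\divergence\big(\lambda(\param)\kappa_\eps\gradient p_*(\param)\big)}{L^2,\Element}
  +\norm{\divergence\big(\lambda(\param)\kappa_\eps\gradient p_*(\param)+R_h^l[p_*;\param]\big)}{L^2,\Element},
\]
so that the first term is the classical interior residual (bounded element-by-element via \cite[Prop.~3.3]{ES2008}, producing $C_\eps^\Element\underline{h_\Element}^{-1}\energynorm{p(\param)-p_*(\param)}{\paramFixed,\Element}$) and the second term isolates the flux-reconstruction deviation (bounded via the arguments of \cite[Proof of Thm.~3.2]{ESV2010}, producing the two jump contributions). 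Your direct route---testing the full residual against the bubble and then inserting $\pm\lambda(\param)\kappa_\eps\gradient p_*(\param)$ after integration by parts---arrives at exactly the same two pieces; the paper's split simply makes the separation explicit at the outset and allows a clean citation of the existing fine-element results without retracing them. Either presentation yields the stated constants and mesh weights.
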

\begin{proof}
  We estimate each local estimator separately.
  It holds for the nonconformity estimator, that
  \begin{align}
    \eta_\text{\textnormal{nc}}^\Element(p_*(\param); \paramFixed)
      &= \Big(
        \sum_{\element \in \triangulation^\Element}
          \energynorm{p_*(\param) - I_\text{\textnormal{os}}[p_*(\param)]}{\paramFixed, \element}^2
      \Big)^{1/2}
    \notag\\
      &\lesssim \Big(
        \sum_{\element \in \triangulation^\Element}
          C_\eps^\element \min_{\element \in \tildetriangulation^\element}(c_\eps^\element)^{-1} \jumpnorm{p(\param) - p_*(\param)}{p, \paramFixed, \tildefaces^\element}^2
      \Big)^{1/2}
    \notag\\
      &\leq \big({C_\eps^\Element}/{\tilde{c}_\eps^\Element}\big)^{1/2} \jumpnorm{p(\param) - p_*(\param)}{p, \paramFixed, \tildefaces^\Element}
    \komma\notag
  \end{align}
  where we use the definition of $\eta_\text{\textnormal{nc}}^\Element[p_*(\param)]$ and $\triangulation^\Element$ in the equality and the arguments of \cite[Proof of Theorem 3.2]{ESV2010} in the first and the definition of $C_\eps^\Element$, $\tilde{c}_\eps^\Element$ and $\tildefaces^\Element$ in the second inequality.

  It holds for the residual estimator that
  \begin{align}
    \eta_\text{\textnormal{r}}^\Element(p_*(\param); \param)
      \leq \big({C_p^\Element}/{c_\eps^\Element}\big)^{1/2} h_\Element
        \Big(
          &\underbrace{\norm{f - \divergence \big( \lambda(\param) \kappa_\eps \gradient p_*(\param) \big)}{L^2, \Element}}_{:= (i)}
    \notag\\
          +&\underbrace{\norm{\divergence \big( \lambda(\param) \kappa_\eps \gradient p_*(\param) + u_*(\param) \big)}{L^2, \Element}}_{:= (ii)}
        \Big)
    \komma\notag
  \end{align}
  where we used the definition of $\eta_\text{\textnormal{r}}^\Element[u_*(\param)]$ and the triangle inequality, which leaves us with two terms we will estimate separately.
  \begin{itemize}
    \item[$(i)$] The first term can be estimated as follows, using the definition of $\triangulation^\Element$ and the arguments of \cite[Proposition 3.3]{ES2008} in the first and the definition of $C_\eps^\Element$ and $\triangulation^\Element$ and the fact that $\max_{\element \in \triangulation} h_\element \leq 1$ in the second inequality:
      \begin{align}
        (i) \lesssim \Big(
              \sum_{\element \in \triangulation^\Element}
                C_\eps^\element h_t^{-2} \energynorm{p(\param) - p_*(\param)}{\param, \element}^2
            \Big)^{-1/2}
          \leq C_\eps^\Element \underline{h_\Element}^{-1} \energynorm{p(\param) - p_*(\param)}{\param, \Element}
        \notag
      \end{align}
    \item[$(ii)$] The second term can be estimated as
      \begin{align}
        (ii) &\lesssim \Big[
            \;\;\,\sum_{\element \in \triangulation}
              C_p^\element h_\element {c_\eps^\element}^{-1}
              \sum_{\face \in \faces^\element}
                {\omega_\face^+}^2 \norm{\jump{\big( \lambda(\param) \kappa_\eps \gradient p_*(\param) \big) \mydot n_\face}}{L^2, \face}^2
        \notag\\
            &\quad\;\;+\sum_{\element \in \triangulation}
              C_p^\element \sigma C_\face^\element {c_\face^\element}^{-1}
              \jumpnorm{p(\param) - p_*(\param)}{p, \param, \faces^\element}^2
          \;\;\Big]^{1/2}
        \notag\\
        &\lesssim\; \quad\;\;\overline{C_p^\Element}^{1/2} {c_\eps^\Element}^{-1/2}\; \overline{\omega}^\Element \; \overline{h_\Element}^{1/2}
            \jumpnorm{p_*(\param)}{\param, \faces^\Element}
        \notag\\
        &\quad\;\;+\;\overline{C_p^\Element}^{1/2} {c_\eps^\Element}^{-1/2}\; {C_\eps^\Element}^{1/2}\; \sigma^{1/2}
          \jumpnorm{p(\param) - p_*(\param)}{p, \param, \faces^\Element}
        \komma\notag
      \end{align}
      using the definition of $\triangulation^\Element$ and the arguments of \cite[Proof of Theorem 3.2]{ESV2010} in the first and the definition of $\overline{C_p^\Element}$, $\overline{h_\Element}$, $C_\eps^\Element$, $c_\eps^\Element$ and $\triangulation^\Element$ in the second inequality.
  \end{itemize}%
  Applying the norm equivalence \eqref{equation::problem::parameter_norm_equivalence} yields the desired result for the residual estimator.

  Finally, it holds for the diffusive flux estimator, that
  \begin{align}
    \eta_\text{\textnormal{df}}^\Element(p_*(\param); \param, \paramFixed)
      &\leq \sqrt{\gammaParamHat}
        \Big(
          \sum_{\element \in \triangulation^\Element}
            \norm{\big( \lambda(\param) \kappa_\eps \big)^{-1/2} \big( \lambda(\param) \kappa_\eps \gradient p_*(\param) + u_*(\param) \big)}{L^2, \element}^2
        \Big)^{1/2}
    \notag\\
      &\lesssim \sqrt{\gammaParamHat}
        \Big[
          \sum_{\element \in \triangulation^\Element}
            \big(
              \max_{\neighbor \in \element \cup \Neighbors(t)} \tfrac{C_\eps^\neighbor}{c_\eps^\neighbor}
            \big)^2
            \Big(
              \energynorm{p(\param) - p_*(\param)}{\param, \element}
    \notag\\
      &\quad\quad\quad\quad\quad\quad\quad\quad\quad\quad\quad\quad\quad\;\;\;
              +\jumpnorm{p(\param) - p_*(\param)}{p, \param, \faces^\element}
            \Big)^2
        \Big]^{1/2}
    \notag\\
      &\lesssim \sqrt{\gammaParamHat}\; {\overline{\underline{C}}_\eps^\Element}^{1/2}
        \Big(
          \energynorm{p(\param) - p_*(\param)}{\param, \Element}
          + \jumpnorm{p(\param) - p_*(\param)}{p, \param, \faces^\Element}
        \Big)
    \komma\notag
  \end{align}
  where we used the definition of $\eta_\text{\textnormal{df}}^\Element[p_*(\param)]$ and $\triangulation^\Element$ and the parameter equivalence from \eqref{equation::problem::parameter_norm_equivalence} in the first, \cite[Lemma 4.12]{ESV2007} in the second and the definition of $\overline{\underline{C}}_\eps^\Element$ and $\faces^\Element$ in the third inequality.
  Applying the norm equivalence \eqref{equation::problem::parameter_norm_equivalence} again finally yields the desired result for the diffusive flux estimator.
\end{proof}

\section{On-line enrichment}
\label{section::online_enrichment}

As mentioned in Section \ref{section::model_reduction} there are two drawbacks of classical RB methods in the context of parametric multi-scale problems, stemming from the fact that $\dim Q_h^k$ roughly scales with $\Order(\eps^{-1})$: $(i)$ expensive high-dimensional evaluations of global quantities during reduction and orthogonalization and $(ii)$ expensive high-dimensional inversions of \eqref{equation::problem::discrete_solution} during the basis generation.
The first shortcoming was addressed by the LRBMS method introduced in \cite{KOH2011,AHKO2012} and finalized by the local error estimator introduced in the previous section, that can be off-line/on-line decomposed with a computational complexity depending only linearly on $|\triangulation|$ (not shown) while the original estimator presented in \cite{AHKO2012} required the computation of $|\Triangulation|$ global Riesz-representatives for each snapshot.
But also the LRBMS method suffers from the second shortcoming, namely that the basis generation (for instance using an adaptive Greedy procedure) requires the global high-dimensional problem \eqref{equation::problem::discrete_solution} to be solved for a possibly large number of parameters out of a finite training set $\Params_\text{train} \subset \Params$.
In the context of parametric multi-scale problems, however, solving \eqref{equation::problem::discrete_solution} may be prohibitively expensive and one may only have the resources to do so for a very limited amount of parameters, if at all.
Such an RB space constructed out of only very few solution snapshots is usually insufficient for nearly all parameters $\param \in \Params$ that were not included in the basis generation.

\begin{algorithm}
  \footnotesize%
  \caption{Discrete weak Greedy algorithm used in the LRBMS method}
  \label{algorithm::greedy}
  \begin{algorithmic}
    \Require \texttt{ONB}, $k_H \in \N$, $\Params_\text{train} \subset \Params$, $\Delta_\text{greedy} > 0$, $N_\text{greedy} \in \N$
    \Ensure  A local reduced basis $\varPhi^\Element$ for each coarse element $\Element \in \Triangulation$.
    \State \emph{Initialize the local reduced bases with the coarse DG basis:}
    \ForAll{$\Element \in \Triangulation$}
      \State ${\varPhi^\Element}^{(0)} \;\;\, \gets \texttt{ONB}\big(\{\text{DG shape functions of order up to $k_H$ w.r.t. $\Element$}\}\big)$
    \EndFor
    \State ${Q_\red(\Triangulation)}^{(0)} \gets \bigoplus_{\Element \in \Triangulation} \text{span} \big( {\varPhi^\Element}^{(0)} \big)$
    \State $n \gets 0$
    \While{%
      $\max\limits_{\param \in \Params_\text{train}} \; \eta(p_\red(\param); \param, \paramFixed, \paramHat) > \Delta_\text{greedy}$, with $p_\red(\param) \in {Q_\red(\Triangulation)}^{(n)}$ solving \eqref{equation::problem::reduced_solution}\\
      \hspace{7ex}\textbf{and} $n < N_\text{greedy}$
    }
      \State \emph{Compute all reduced quantities w.r.t} ${Q_\red(\Triangulation)}^{(n)}$.
      \State \emph{Find the worst approximated parameter, with $p_\red(\param) \in {Q_\red(\Triangulation)}^{(n)}$ solving \eqref{equation::problem::reduced_solution}}:
      \State $\quad\quad \param_\text{max} \gets \mathop{\arg\max}\limits_{\param \in \Params_\text{train}} \; \eta(p_\red(\param); \param, \paramFixed, \paramHat)$
      \State \emph{Extend the local reduced bases, with $p_h(\param) \in Q_h^k$ solving \eqref{equation::problem::discrete_solution}}:
      \ForAll{$\Element \in \Triangulation$}
        \State ${\varPhi^\Element}^{(n+1)} \gets \texttt{ONB}\big(\{{\varPhi^\Element}^{(n)}, \restrictInline{p_h(\param)}{\Element}\} \big)$
      \EndFor
    \State ${Q_\red(\Triangulation)}^{(n+1)} \gets \bigoplus_{\Element \in \Triangulation} \text{span} \big( {\varPhi^\Element}^{(n+1)} \big)$
    \State $n \gets n + 1$
    \EndWhile\\
    \Return $\big\{ {\varPhi^\Element}^{(n)} \big\}_{\Element \in \Triangulation}$
  \end{algorithmic}
\end{algorithm}

To address this shortcoming we relax the notion of a strict off-line/on-line splitting of the computation in the classical sense.
While the computational complexity of the on-line phase must not depend on any global high-dimensional quantities (namely $\dim Q_h^k$) for RB methods, we allow for high-dimensional but local computations (of order $\dim Q_h^{k,\Element}$, with $\Element \in \Triangulation$) in the context of the LRBMS method.
The idea is as follows: during the off-line phase we initialize the local reduced bases $Q_\red^\Element$ with a DG basis of order up to $k_H \in \N$ with respect to the coarse elements $\Element \in \Triangulation$, thus ensuring that any reduced solution is at least as good as a DG solution on the coarse triangulation.
We then carry out a discrete weak Greedy algorithm based on the error estimator defined in Corollary \ref{corollary::error::locally_computable_energy_norm_estimate}  while allowing only for a limited amount of global solution snapshots, $N_\text{max} \in \N$, and extend the local bases with these snapshots using an orthonormalization algorithm \texttt{ONB} locally on each $\Element \in \Triangulation$.
This procedure is summarized in Algorithm \ref{algorithm::greedy}.

During the on-line phase, given any $\param \in \Params$, we compute a reduced solution $p_\red(\param) \in Q_\red(\Triangulation)$ and efficiently assess its quality using the error estimator.
If the estimated error is above a prescribed tolerance, $\Delta_\text{online} > 0$, we start an intermediate local enrichment phase to enrich the reduced bases in the SEMR (\underline{s}olve $\to$ \underline{e}stimate $\to$ \underline{m}ark $\to$ \underline{r}efine) spirit of adaptive mesh refinement (the procedure is summarized in Algorithm \ref{algorithm::online_enrichment}):
we first compute local error indicators $\eta^\Element(p_\red(\param); \param, \paramFixed, \paramHat)$ for all $\Element \in \Triangulation$, such that $\eta(\cdot; \param, \paramFixed, \paramHat)^2 \leq \sum_{\Element \in \Triangulation} \eta^\Element(\cdot; \param, \paramFixed, \paramHat)^2$, defined as
\begin{align}
    \eta^\Element(\cdot; \param, \paramFixed, \paramHat)^2
      := \tfrac{3}{\sqrt{\alphaParam}}
        \Big[
          \sqrt{\gammaParam} \, \eta_\text{\textnormal{nc}}^\Element(\cdot; \paramFixed)^2
          + \eta_\text{\textnormal{r}}^\Element(\cdot; \param)^2
          + \tfrac{1}{\sqrt{\alphaParamHat}} \, \eta_\text{\textnormal{df}}^\Element(\cdot; \param, \paramHat)^2
        \Big]
    \label{equation::online::local_indicators}
\end{align}
and mark coarse elements $\tildeTriangulation \subseteq \Triangulation$ for enrichment, given a marking strategy \texttt{MARK}.
For each marked $\Element \in \tildeTriangulation$ we solve
\begin{align}
  b_h^{\Element_\delta}(p_h^{\Element_\delta}(\param), q_h; \param) = l_h^{\Element_\delta}(q_h)
  &&\text{for all } q_h \in Q_h^k(\triangulation^{\Element_\delta})
  \label{equation::online::oversampled_problem}
\end{align}
on an overlapping domain $\Element_\delta \supset \Element$ with the insufficient reduced solution $p_\red(\param)$ as dirichlet boundary values on $\boundary \Element_\delta$ to obtain an updated detailed solution $p_h^{\Element_\delta}(\param) \in Q_h^k(\triangulation^{\Element_\delta})$.
Here $Q_h^k(\triangulation^{\Element_\delta})$, $b_h^{\Element_\delta}$ and $l_h^{\Element_\delta}$ are extensions of $Q_h^k(\triangulation^\Element)$, $b_h^\Element$ and $l^\Element$, respectively, to the oversampled domain $\Element_\delta$ while additionally encoding $p_\red(\param)$ as dirichlet boundary values.
We then extend each marked local reduced basis by performing an orthonormalization procedure on $\restrictInline{p_h^{\Element_\delta}(\param)}{\Element}$ with respect to the existing local basis and update all reduced quantities with respect to the newly added basis vector.
We finally compute an updated reduced solution in the updated coarse reduced space and estimate the error again.
We repeat this procedure until the estimated error falls below the prescribed tolerance $\Delta_\text{online}$ or until the prescribed maximum number of iterations, $N_\text{online} \in \N$, is reached.
Possible choices for \texttt{ONB} and \texttt{MARK} are given in Section \ref{subsection::experiments_enrichment}.
Note that first steps in the direction of on-line enrichment were published in \cite{AO2013}.

\begin{algorithm}
  \footnotesize%
  \caption{Adaptive basis enrichment in the intermediate local enrichment phase}
  \label{algorithm::online_enrichment}
  \begin{algorithmic}
    \Require \texttt{MARK}, \texttt{ONB}, $\big\{ \varPhi^\Element \big\}_{\Element \in \Triangulation}$, $p_\red(\param)$, $\param$, $\Delta_\text{online} > 0$, $N_\text{online} \in \N$
    \Ensure  Updated reduced solution and local reduced bases.
    \State ${\varPhi^\Element}^{(0)} \gets \varPhi^\Element$, $\forall \Element \in \Triangulation$
    \State $n \gets 0$
    \While{%
      $\eta(p_\red(\param); \param, \paramFixed, \paramHat) > \Delta_\text{online}$ \textbf{and} $n < N_\text{online}$
    }
      \ForAll{$\Element \in \Triangulation$}
        \State \emph{Compute local error indicator $\eta^\Element(p_\red(\param); \param, \paramFixed, \paramHat)$ according to \eqref{equation::online::local_indicators}.}
      \EndFor
      \State $\tildeTriangulation \gets \texttt{MARK}\big( \Triangulation \big)$
      \ForAll{$\Element \in \tildeTriangulation$}
        \State \emph{Solve \eqref{equation::online::oversampled_problem} for $p_h^{\Element_\delta}(\param) \in Q_h^k(\triangulation^{\Element_\delta})$.}
        \State ${\varPhi^\Element}^{(n+1)} \gets \texttt{ONB}\big(\{{\varPhi^\Element}^{(n)}, \restrictInline{p_h^{\Element_\delta}(\param)}{\Element}\} \big)$
      \EndFor
      \State ${Q_\red(\Triangulation)}^{(n+1)} \gets \bigoplus_{\Element \in \tildeTriangulation} \text{span} \big( {\varPhi^\Element}^{(n+1)} \big) \oplus \bigoplus_{\Element \in \Triangulation \backslash \tildeTriangulation} \text{span} \big( {\varPhi^\Element}^{(n)} \big)$
      \State \emph{Update all reduced quantities w.r.t ${Q_\red(\Triangulation)}^{(n+1)}$.}
      \State \emph{Solve \eqref{equation::problem::reduced_solution} for $p_\red(\param) \in {Q_\red(\Triangulation)}^{(n+1)}$.}
      \State $n \gets n + 1$
    \EndWhile\\
    \Return $p_\red(\param)$, $\big\{ {\varPhi^\Element}^{(n)} \big\}_{\Element \in \Triangulation}$
  \end{algorithmic}
\end{algorithm}

\begin{remark}
  It is also possible to use other methods to compute (or approximate) $p_h(\param)$ during the Greedy procedure, in particular other domain decomposition or multi-scale methods.
  If the resulting approximation does not fulfill the local conservation property of Lemma \ref{lemma::error::local_conservativity}, however, the estimator would have to be replaced during the Greedy algorithm, for instance in the spirit of \cite{PVWW2013}.
  But during the on-line phase the estimator would be valid for any reduced solution, as long as the basis is initialized with at least a constant function.
  In particular one could use variants of the MsFEM (see \cite{EH2009}), the HMM (see \cite{AEEV2012}) or in particular the DG-HMM (see \cite{Abd2012,AH2014}) during the Greedy procedure to generate a coarse reduced basis with approximation properties of order $H$.
  Fine scale features of the solution would then be adaptively recovered during the on-line enrichment phase, if and where needed.
\end{remark}

\begin{remark}
  The computation of the local error indicators in Algorithm \ref{algorithm::online_enrichment} can be efficiently off-line/on-line decomposed (not shown here).
  Once a set of subdomains has been marked, the enrichment can be carried out in parallel without any communication.
  For the update of the reduced quantities only local information and the information on one layer of neighboring fine grid cells is needed.
  The on-line phase, however, requires information on $\Triangulation$ (in particular the number of coarse elements and neighboring information), in contrast to traditional RB methods.
\end{remark}

\begin{remark}
  Our choice of the Greedy algorithm and the adaptive on-line enrichment covers a wide range of scenarios.
  Disabling the on-line enrichment (by setting $N_\text{online} = 0$) and choosing any suitable $\Delta_\text{greedy}$ and $N_\text{greedy}$ yields the standard Greedy basis generation, well known in the RB context.
  Setting $N_\text{greedy} = 0$ and $k_H = 1$, on the other hand, disables the Greedy procedure and merely initializes the reduced bases with the coarse DG basis of order one.
  This is of particular interest in situations where the computation of solutions of the detailed problem during the Greedy procedure might be too costly.
  In that setup nearly all work is done in the adaptive on-line enrichment phase.
  Many other variants are possible, e.g. other local boundary conditions, several marking strategies \texttt{MARK} or orthonormalization algorithms \texttt{ONB} or other stopping criteria; one could also limit the number of intermediate snapshots added to the local bases.
  Depending on these choices the resulting method is then close to existing DD methods (i.e., a DD method with overlapping subdomains, see \cite{QV1999}) or multi-scale methods (i.e., the adaptive iterative multi-scale finite volume method \cite{HJ2011}).
\end{remark}

The LRBMS method with the proposed adaptive on-line enrichment strategy is now suitable for a far wider range of circumstances than standard RB methods or the previously published variant of the LRBMS method.
As mentioned before it can now be applied if the computational power available for the off-line phase is limited by time- or resource constraints.
It can also be applied if the set of training parameters given to the Greedy Algorithm was insufficiently chosen or even if on-line a solution to a parameter is requested that is outside of the original bounds of the parameter space.
In general, the on-line adaptive LRBMS method can be applied whenever the basis that was generated during the off-line phase turns out to not be sufficient for what is required during the on-line phase.

\section{Numerical experiments}

In this section we investigate the performance of the error estimator in the context of the DG discretization defined in Section \ref{subsection::discretization} as well as in the context of the the LRBMS method defined in Section \ref{section::model_reduction}.
We also investigate the performance of the on-line enrichment procedure we proposed in the previous section.

We used several software packages for the implementation: everything concerning the discretization was implemented within the high performance \Cpp{} software framework \texttt{DUNE} \cite{BBD+2008,BBD+2008a} while everything related to model reduction was implemented based on the \texttt{pyMOR} package \cite{pymor}.
Data functions, container and linear solvers were implemented within \dune{stuff} \cite{dunestuff}, discrete function spaces (based on the discretization modules \dune{fem} \cite{DKNO2010} and \dune{pdelab} \cite{dunepdelab}), operators and products for the discretization as well as the error estimator were implemented within \dune{gdt} \cite{dunegdt}.
The coarse triangulation was implemented in \dune{grid-multiscale} \cite{dunegridmultiscale} while the Python bindings forming the bridge between \texttt{DUNE} and \texttt{pyMOR} were implemented in \dune{pymor} \cite{dunepymor}.
Finally a high-level solver for linear elliptic (and possibly parametric) PDEs was implemented in \dune{hdd} \cite{dunehdd} and exposed to \texttt{pyMOR}, where everything related to model reduction was implemented.

The fine triangulations $\triangulation$ used are conforming refinements of triangular grids, represented by instances of \texttt{ALUGrid< 2, 2, simplex, conforming >} (see \cite{DKN2014}).
In the following experiments we choose a DG space $Q_h^{1,\Element}(\triangulation^\Element)$ locally on all coarse elements $\Element \in \Triangulation$.
The resulting discretization thus coincides with the one defined in \cite{ESZ2009,ESV2010} (though in general other choices are possible).
All coarse triangulations $\Triangulation$ used consist of squared elements (though arbitrary shapes are possible).

Most figures in this publication were created or arranged using {T}i{\it k}{Z} \cite{Tan2013,Tan} and pgfplots \cite{Feu}, the colorblind safe colors in Figures \ref{figure::experiments::enrchment::academic::error_evolution} and \ref{figure::experiments::enrichment::spe10::error_evolution} were selected with ColorBrewer \cite{colorbrewer}.

\subsection{Study of the a-posteriori error estimator}
\label{subsection::experiments_convergence}

To study the convergence properties of our estimator we consider two experiments.
The first one serves as an academic example and as a comparison to the work of \cite{ESV2007,ESV2010}.
The second experiment demonstrates the efficiency of the estimator in realistic circumstances.
In both experiments we compute estimator components ${\eta_*}^2 := \sum_{\Element \in \Triangulation} {\eta_*^\Element}^2$, for $* = $, nc, r, df and the estimator $\eta$ as defined in Corollary \ref{corollary::error::locally_computable_energy_norm_estimate}, using a $0th$ order diffusive flux reconstruction ($l = 0$ in \eqref{equation::error::flux_reconstruction::1}, \eqref{equation::error::flux_reconstruction::2}).
If no analytical solution $p(\param)$ is available we approximate the discretization error $\energynorm{p(\param) - p(\param)}{\paramFixed}$ by substituting $p(\param)$ for a discrete solution on a finer grid and by computing all integrals using a high order quadrature on the finer grid.
We denote the efficiency of the estimator, $\eta(p_h(\param); \param, \paramFixed, \paramHat) / \energynorm{p(\param) - p_h(\param)}{\paramFixed} \geq 1$, by ``eff.'' and the average (over all refinement steps) experimental order of convergence of a quantity by ``order''.

\textbf{Academic example.} We consider \eqref{equation::problem::global_pressure} on $\Omega = [-1, 1]^2$ with a parameter space $\Params = [0.1, 1]$, $\kappa_\eps \equiv \text{id}$, $f(x,y) = \tfrac{1}{2} \pi^2 \cos(\tfrac{1}{2} \pi x) \cos(\tfrac{1}{2} \pi y)$, $\lambda(x,y; \param) = 1 + (1 - \param) \cos(\tfrac{1}{2} \pi x) \cos(\tfrac{1}{2} \pi y)$ and homogeneous Dirichlet boundary values.
We study the components of the estimator as well as its efficiency in several circumstances, i.e., for different parameters $\param$, $\paramFixed$, $\paramHat \in \Params$ and triangulations $\triangulation$ and $\Triangulation$.
\begin{table}[b]
  \centering%
  \footnotesize%
  \begin{tabular}{>{$}c<{$}|*{4}{s}c}
     |\triangulation| & \mcol{c}{\energynorm{p(\param) - p_h(\param)}{\paramFixed}}
                                       & \mcol{c}{\eta_\text{nc}(\cdot; \paramFixed)}
                                                        & \mcol{c}{\eta_\text{r}(\cdot; \param)}
                                                                         & \mcol{c}{\eta_\text{df}(\cdot; \param, \paramHat)}
                                                                                          & \text{eff.} \\ \hline
    \hspace{2.4mm}128 & \sci{3.28}{-1} & \sci{1.66}{-1} & \sci{5.79}{-1} & \sci{3.55}{-1} & 3.36        \\
    \hspace{2.4mm}512 & \sci{1.60}{-1} & \sci{7.89}{-2} & \sci{2.90}{-1} & \sci{1.76}{-1} & 3.40        \\
            2,\hnS048 & \sci{7.78}{-2} & \sci{3.91}{-2} & \sci{1.45}{-1} & \sci{8.73}{-2} & 3.49        \\
            8,\hnS192 & \sci{3.47}{-2} & \sci{1.95}{-2} & \sci{7.27}{-2} & \sci{4.35}{-2} & 3.91        \\ \hline
    \text{order}      & \mcol{c}{1.08} & \mcol{c}{1.03} & \mcol{c}{1.00} & \mcol{c}{1.01} & --
  \end{tabular}
  \caption{%
    Discretization error, estimator components and efficiency of the error estimator for the academic example in Section \ref{subsection::experiments_convergence} with $|\Triangulation| = 1$ and $\param = \paramFixed = \paramHat = 1$.}
  \label{table::experiments::estimator::academic::nonparametric}
\end{table}
\begin{table}
  \centering%
  \footnotesize%
  \begin{tabular}{r||s|sc|ssc}
    \mcol{c}{}               & \mcol{c|}{}     & \multicolumn{2}{c|}{$\paramHat = 1$}
                                                                       & \multicolumn{3}{c}{$\paramHat = 0.1$}  \\\cline{3-7}
    $|\triangulation|\hspace{1.5mm}
                \,/\, \hspace{1.85mm}|\Triangulation|\hspace{1.95mm}$
                             & \mcol{c|}{\eta_\text{r}(\cdot;\param)}
                                               & \mcol{c}{\eta(\cdot;\param, \paramFixed, \paramHat)}
                                                                & eff. & \mcol{c}{\eta_\text{df}(\cdot;\param, \paramHat)}
                                                                                        & \mcol{c}{\eta(\cdot;\param, \paramFixed, \paramHat)}
                                                                                                         & eff. \\\hline\hline
           $128 \,/\, \hspace{1.45mm}2 \times 2\hspace{1.45mm}$
                             & \sci{2.89}{-1}  & \sci{8.10}{-1} & 2.47 & \sci{3.16}{-1} & \sci{7.71}{-1} & 2.35 \\
           $512 \,/\, \hspace{1.45mm}4 \times 4\hspace{1.45mm}$
                             & \sci{7.26}{-2}  & \sci{3.27}{-1} & 2.04 & \sci{1.56}{-1} & \sci{3.08}{-1} & 1.92 \\
    $2,\hnS 048 \,/\, \hspace{1.45mm}8 \times 8\hspace{1.45mm}$
                             & \sci{1.82}{-2}  & \sci{1.45}{-1} & 1.86 & \sci{7.74}{-2} & \sci{1.35}{-1} & 1.73 \\
    $8,\hnS 192 \,/\,               16 \times 16$
                             & \sci{4.54}{-3}  & \sci{6.76}{-2} & 1.95 & \sci{3.85}{-2} & \sci{6.26}{-2} & 1.80 \\\hline
    \mcol{c||}{\text{order}} & \mcol{c|}{2.00} & \mcol{c}{1.20} & --   & \mcol{c}{1.01} & \mcol{c}{1.21} & --
  \end{tabular}
  \caption{%
    Selected estimator components, estimated error and efficiency of the error estimator for the academic example in Section \ref{subsection::experiments_convergence} with $\triangulation$ and $\Triangulation$ simultaneously refined, $\param = 1$ and two choices of $\paramHat$.
    Note that the estimator components $\eta_\text{nc}$ and $\eta_\text{df}$ are not influenced by $\Triangulation$, the estimator components   $\eta_\text{nc}$ and $\eta_\text{r}$ are not influenced by $\paramHat$ and the discretization error is not influenced by either.
    Thus only $\eta_\text{r}$, $\eta$ and its efficiency are given for $\paramHat = 1$ and only $\eta_\text{df}$, $\eta$ and its efficiency are given for $\paramHat = 0.1$ (the other quantities coincide with the ones in Table \ref{table::experiments::estimator::academic::nonparametric}).}
  \label{table::experiments::estimator::academic::offline_online_decomposable}
\end{table}
We choose $\triangulation$ just as in \cite[Section 8.1]{ESV2007} and begin with $\param = \paramFixed = \paramHat = 1$, thus reproducing the nonparametric example studied in \cite[Section 8.1]{ESV2007} (since $\lambda \equiv 1$ and all constants involving $\alpha$ and $\gamma$ are equal to $1$).
For this specific choice of parameters an exact solution is available (see \cite[Section 8.1]{ESV2007}).
In this configuration, $\eta_\text{nc}$ and $\eta_\text{df}$ coincide with their respective counterparts defined in \cite{ESV2007,ESV2010} while $\eta_\text{r}$ is directly influenced by the choice of the coarse triangulation and the parametric nature of $\lambda$ (entering $c_\eps^\Element$).
Choosing $\Triangulation = \Omega$ (the coarse grid configuration with the worst efficiency), we observe results similar to \cite[Table 1]{ESV2007} for $\eta_\text{df}$ and $\eta_\text{nc}$ in Table \ref{table::experiments::estimator::academic::nonparametric}.
In contrast, $\eta_\text{r}$ shows only linear convergence while the residual estimator in \cite[Table 1]{ESV2007} converges with second order.
Overall, the efficiency of the estimator $\eta$ is around $3.5$ (for fixed $|\Triangulation| = 1$) while the efficiency of the estimator in \cite[Table 1]{ESV2007} is around $1.2$.
We can recover the superconvergence of $\eta_\text{r}$, however, by refining $\Triangulation$ along with $\triangulation$ (thus keeping the ratio $H/h$ fixed), see the left columns of Table \ref{table::experiments::estimator::academic::offline_online_decomposable}.
In order to make the estimator off-line/on-line decomposable in the parametric setting, $\paramHat$ has to be fixed throughout the experiment.
Choosing $\paramHat = 0.1$ has no negative impact on the efficiency of the estimator, as we observe in the right columns of Table \ref{table::experiments::estimator::academic::offline_online_decomposable}.
It is often desirable to additionally fix the error norm throughout the experiments.
Choosing $\paramFixed = 0.1$ we still observe a very reasonable efficiency in Table \ref{table::experiments::estimator::academic::fixed_norm}.
\begin{table}
  \centering%
  \footnotesize%
  \begin{tabular}{r|*{3}{s}c}
    $|\triangulation|\hspace{1.5mm}
               \,/\, \hspace{1.85mm}|\Triangulation|\hspace{1.95mm}$
                            & \mcol{c}{\energynorm{p(\param) - p_h(\param)}{\paramFixed}}
                                             & \mcol{c}{\eta_\text{nc}(\cdot; \paramFixed)}
                                                              & \mcol{c}{\eta(\cdot; \param, \paramFixed, \paramHat)}
                                                                               & \text{eff.} \\ \hline
          $128 \,/\, \hspace{1.45mm}2 \times  2\hspace{1.45mm}$
                            & \sci{3.81}{-1} & \sci{1.82}{-1} & \sci{1.18}{0}  & 3.10        \\
          $512 \,/\, \hspace{1.45mm}4 \times  4\hspace{1.45mm}$
                            & \sci{1.87}{-1} & \sci{8.57}{-2} & \sci{5.00}{-1} & 2.67        \\
    $2,\hnS048 \,/\, \hspace{1.45mm}8 \times  8\hspace{1.45mm}$
                            & \sci{9.08}{-2} & \sci{4.22}{-2} & \sci{2.29}{-1} & 2.52        \\
    $8,\hnS192 \,/\,               16 \times 16$
                            & \sci{4.05}{-2} & \sci{2.11}{-2} & \sci{1.10}{-1} & 2.71        \\ \hline
    \mcol{c|}{\text{order}} & \mcol{c}{1.08} & \mcol{c}{1.03} & \mcol{c}{1.14} & --
  \end{tabular}
  \caption{%
      Discretization error, selected estimator component, estimated error and efficiency of the error estimator for the academic example in Section \ref{subsection::experiments_convergence} with $\triangulation$ and $\Triangulation$ simultaneously refined, $\param = 1$ and $\paramFixed = \paramHat = 0.1$.
      Note that $\eta_\text{r}$ and $\eta_\text{df}$ are not influenced by $\paramFixed$ and coincide with Table \ref{table::experiments::estimator::academic::offline_online_decomposable}.}
  \label{table::experiments::estimator::academic::fixed_norm}
\end{table}

\textbf{Multi-scale example.} We consider \eqref{equation::problem::global_pressure} on $\Omega = [0, 5] \times [0, 1]$ with $f(x,y) = 2\cdot 10^3 $ if $(x,y) \in [0.95, 1.10] \times [0.30, 0.45]$, $f(x,y) = -1\cdot 10^3$ if $(x,y) \in [3.00, 3.15] \times [0.75, 0.90]$ or $(x,y) \in [4.25, 4.40] \times [0.25, 0.40]$ and $0$ everywhere else, $\lambda(x,y; \param) = 1 + (1 - \param) \lambda_c(x,y)$, $\kappa_\eps = \kappa \,\text{id}$, homogeneous Dirichlet boundary values and a parameter space $\Params = [0.1, 1]$.
On each $\element \in \triangulation$, $\restrict{\kappa}{\element}$ is the corresponding $0$th entry of the permeability tensor used in the first model of the 10th SPE Comparative Solution Project (which is given by $100 \times 20$ constant tensors, see \cite{spe10}) and $\lambda_c$ models a channel, as depicted in Figure \ref{figure::experiments::estimator::spe10::datafunctions}.
The right hand side $f$ models a strong source in the middle left of the domain and two sinks in the top and right middle of the domain, as is visible in the structure of the solutions (see Figure \ref{figure::experiments::estimator::spe10::datafunctions}).
The role of the parameter $\param$ is to toggle the existence of the channel $\lambda_c$.
Thus $\lambda(\param)\kappa$ reduces to the above mentioned permeability tensor for $\param = 1$ while $\param = 0.1$ models the removal of a large conductivity region near the center of the domain (see the first row in Figure \ref{figure::experiments::estimator::spe10::solutions}).
This missing channel has a visible impact on the structure of the pressure distribution as well as the reconstructed velocities, as we observe in the left column of Figure \ref{figure::experiments::estimator::spe10::solutions}.
With a contrast of $10^6$ in the diffusion tensor and an $\eps$ of about $|\Omega|/2,\hnS000$ this setup is a challenging heterogeneous multi-scale problem.
\begin{figure}[b]
  \centering%
  \footnotesize%
  \includegraphics[width=\textwidth]{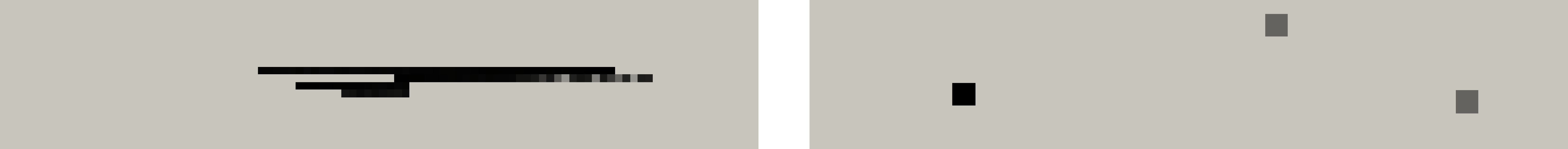}
  \caption{%
    Data functions of the multi-scale example in Section \ref{subsection::experiments_convergence} on a triangulation with $|\triangulation| = 16,\hnS000$ elements: location of the channel function $\lambda_c$ (left) and plot of the force $f$ (right) modeling one source (black: $2\mydot 10^3$) and two sinks (dark gray: $-1\mydot 10^3$, zero elsewhere).}
  \label{figure::experiments::estimator::spe10::datafunctions}
\end{figure}
\begin{figure}
  \centering%
  \footnotesize%
  \includegraphics[width=\textwidth]{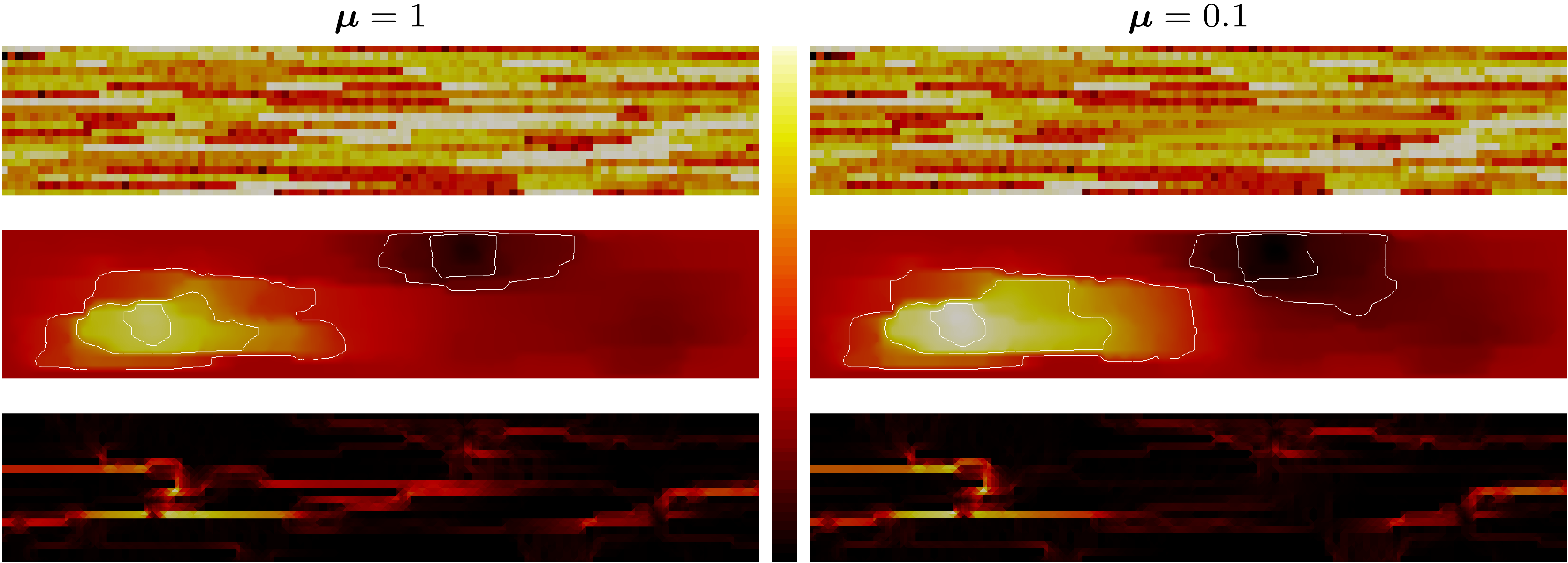}
  \caption{%
    Data functions and sample solutions of the multi-scale example in Section \ref{subsection::experiments_convergence} on a triangulation with $|\triangulation| = 16,\hnS000$ elements for parameters $\param = 1$ (left column) and $\param = 0.1$ (right column).
    In each row both plots share the same color map (middle) with different ranges per row.
    From top to bottom: logarithmic plot of $\lambda(\param)\kappa$ (dark: $1.41\mydot 10^{-3}$, light: $1.41\mydot 10^3$), plot of the pressure $p_h(\param)$ (solution of \eqref{equation::problem::discrete_solution}, dark: $-3.92\mydot 10^{-1}$, light: $7.61\mydot 10{-1}$, isolines at 10\%, 20\%, 45\%, 75\% and 95\%) and plot of the magnitude of the reconstructed diffusive flux $R_h^0[p_h(\param); \param]$ (defined in \eqref{equation::error::flux_reconstruction::1} and \eqref{equation::error::flux_reconstruction::2}, dark: $3.10\mydot 10^{-6}$, light: $3.01\mydot 10^2$).
    Note the presence of high-conductivity channels in the permeability (top left, light regions) throughout large parts of the domain.
    The parameter dependency models a removal of one such channel in the middle right of the domain (top right), well visible in the reconstructed Darcy velocity fields (bottom).}
  \label{figure::experiments::estimator::spe10::solutions}
\end{figure}
\begin{figure}
  \centering%
  \footnotesize%
  \includegraphics[width=\textwidth]{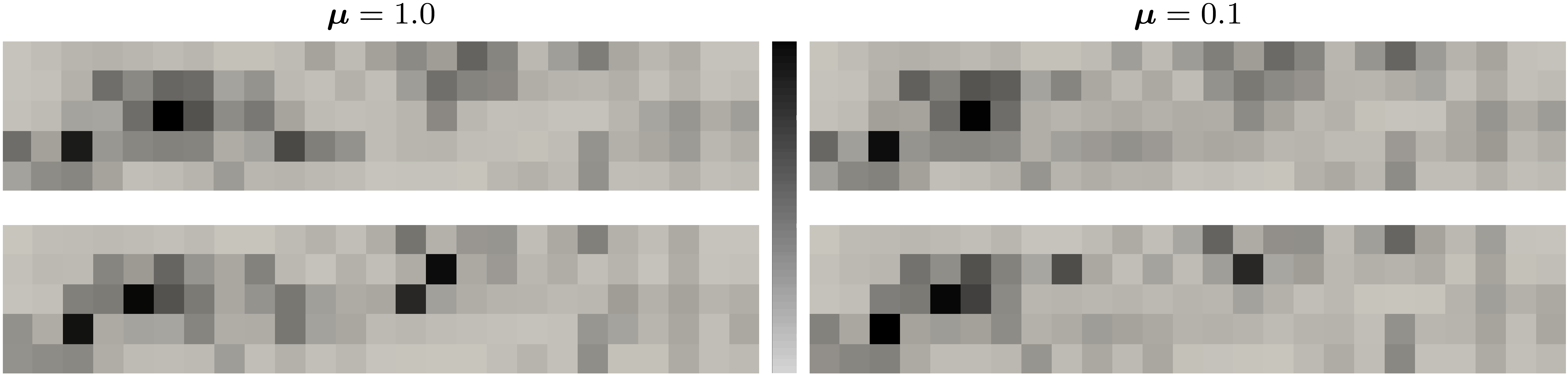}
  \caption{%
    Spatial distribution of the relative error contribution (top), $\energynorm{p(\param) - p_h(\param)}{\paramFixed, \Element} / \energynorm{p(\param) - p_h(\param)}{\paramFixed}$, and the relative estimated error contribution (bottom), $\eta^\Element(p_h(\param); \param, \paramFixed, \paramHat) / \big(\sum_{\Element \in \Triangulation} \eta^\Element(p_h(\param); \param, \paramFixed, \paramHat)^2\big)^{-1/2}$, for all $\Element \in \Triangulation$, for the multi-scale example in Section \ref{subsection::experiments_convergence} with $|\Triangulation| = 25 \times 5$ and $\paramFixed = \paramHat = 0.1$ for parameters $\param = 1$ (left column, light: $2.26\mydot 10^{-3}$, dark: $3.78\mydot 10^{-1}$) and $\param = 0.1$ (right column, light: $4.02\mydot 10^{-3}$, dark: $3.73\mydot 10^{-1}$).}
  \label{figure::experiments::estimator::spe10::error_distribution}
\end{figure}
\begin{table}
  \centering%
  \footnotesize%
  \begin{tabular}{r|*{4}{s}c}
    $|\triangulation|\hspace{3.6mm} \,/\, \hspace{3.35mm}|\Triangulation|\hspace{1.95mm}$
                            & \mcol{c}{\energynorm{p(\param) - p_h(\param)}{\paramFixed}}
                                             & \mcol{c}{\eta_\text{nc}(\cdot; \paramFixed)}
                                                              & \mcol{c}{\eta_\text{r}(\cdot; \param)}
                                                                                & \mcol{c}{\eta_\text{df}(\cdot; \param, \paramHat)}
                                                                                                 & \text{eff.} \\ \hline
           $16,\hnS000 \,/\, \hspace{1.45mm}25 \times 5\hspace{1.45mm}$
                            & \sci{7.49}{-1} & \sci{2.13}{0}  & \sci{1.88}{-9}  & \sci{9.66}{-1} & 4.14        \\
           $64,\hnS000 \,/\, \hspace{1.45mm}50 \times 10$
                            & \sci{4.52}{-1} & \sci{1.46}{0}  & \sci{7.05}{-10} & \sci{6.05}{-1} & 4.58        \\
          $256,\hnS000 \,/\,               100 \times 20$
                            & \sci{2.58}{-1} & \sci{1.02}{0}  & \sci{7.44}{-11} & \sci{3.85}{-1} & 5.44        \\
    $1,\hnS024,\hnS000 \,/\,               200 \times 40$
                            & \sci{1.26}{-1} & \sci{7.20}{-1} & \sci{2.00}{-10} & \sci{2.49}{-1} & 7.70        \\ \hline
    \mcol{c|}{\text{order}} & \mcol{c}{0.86} & \mcol{c}{0.52} & \mcol{c}{1.07}  & \mcol{c}{0.65} & --
  \end{tabular}
  \caption{%
    Discretization error, estimator components and efficiency of the error estimator for the multi-scale example in Section \ref{subsection::experiments_convergence} with $\triangulation$ and $\Triangulation$ simultaneously refined and $\param = \paramFixed = \paramHat = 1$.
    Note that $\eta_\text{r}$ should be close to zero (since $f$ is piecewise constant, compare Figure \ref{figure::experiments::estimator::spe10::datafunctions}) and suffers from numerical inaccuracies (ignoring the last refinement would yield an average order of 2.33 for $\eta_\text{r}$).}
  \label{table::experiments::estimator::spe10::nonparametric}
\end{table}

While we observe in Table \ref{table::experiments::estimator::spe10::nonparametric} that the convergence rates of the estimator components are not as good as in the experiment studied before, the estimator shows an average efficiency of $5.5$, which is quite remarkable considering the contrast of the data functions.
Fixing $\paramFixed = \paramHat = 0.1$ (to obtain a fully off-line/on-line decomposable configuration with a fixed error norm) yields an average efficiency of $10.2$ which is still very reasonable.
In addition we observe a good agreement between the spatial distribution of the error and the estimator indicators in Figure \ref{figure::experiments::estimator::spe10::error_distribution}.

\subsection{Adaptive on-line enrichment}
\label{subsection::experiments_enrichment}

To demonstrate the proposed adaptive on-line enrichment Algorithm \ref{algorithm::online_enrichment} and the flexibility of the LRBMS we study two distinct circumstances.
We first consider a smooth academic example where we disable the greedy procedure and build the reduced bases only by local enrichment.
The second example is again a multi-scale one with global channels in the permeability and we allow for very few global solution snapshots and adaptively enrich afterwards.

For the orthonormalization algorithm \texttt{ONB} in the Greedy Algorithm \ref{algorithm::greedy} we use the stabilized Gram Schmidt procedure implemented in \texttt{pyMOR}\footnote{\url{http://docs.pymor.org/en/0.2.x/_modules/pymor/la/gram_schmidt.html\#gram_schmidt}} with respect to the scalar product given by $(p,q) \mapsto b^\Element(p, q; \paramFixed) + \sum_{\face \in \faces^\Element} b_p^\Element(p, q; \paramFixed)$ on each $\Element \in \Triangulation$.
In contrast to other possible basis extension algorithms (e.g. using a proper orthogonal decomposition) the Gram Schmidt basis extension yields hierarchical local reduced bases.
This is of particular interest in the context of on-line enrichment, since we do not need to update reduced quantities w.r.t existing basis vectors after enrichment.
We always initialize the local reduced bases with the coarse DG basis of order up to one by setting $k_H = 1$ in the Greedy Algorithm \ref{algorithm::greedy}.
We choose the same orthonormalization algorithm in the adaptive basis enrichment Algorithm \ref{algorithm::online_enrichment} and use several marking strategies for \texttt{MARK}, depending on the circumstances (detailed below).
Regarding the overlap for the local enrichment we always choose the overlapping subdomains $\Element_\delta \supset \Element$ to include $\Element$ and all subdomains that touch it, thus choosing an overlap of $\Order(H)$ as motivated in \cite{HP2013}.

Since the error of any reduced solution $\energynorm{p(\param) - p_\red(\param)}{\paramFixed}$ can not be lower than the error of the corresponding detailed solution $\energynorm{p(\param) - p_h(\param)}{\paramFixed}$ we always choose $\Delta_\text{online}$ in Algorithm \ref{algorithm::online_enrichment} to be slightly larger than $\max_{\param \in \Params_\text{online}} \eta(p_h(\param); \param, \paramFixed, \paramHat)$ in our experiments (see below), where $\Params_\text{online} \subset \Params$ is the set of all parameters we consider during the on-line phase.
This is only necessary since we do not allow for an adaptation of $\triangulation$; combining our on-line adaptive LRBMS with the ideas of \cite{Yan2014} would overcome this restriction.

\textbf{Academic example.} We again consider the academic example detailed in Subsection \ref{subsection::experiments_convergence} on fixed triangulations with $|\triangulation| = 131,\hnS072$ and $|\Triangulation| = 8 \times 8$ and choose the set of on-line parameters $\Params_\text{online}$ to consist of 10 randomly chosen parameters $\param_0, \dots, \param_9 \in \Params$.
We set $N_\text{greedy} = 0$ and $k_H = 1$, thus disabling any Greedy search and initializing the local bases with the coarse DG basis of order up to one (consisting of 4 shape functions).
In this setup $\max_{\param \in \Params_\text{online}} \eta(p_h(\param); \param, \paramFixed, \paramHat) = 2.79\cdot 10^{-2}$ and we choose $\Delta_\text{online} = 5\cdot 10^{-2}$ in Algorithm \ref{algorithm::online_enrichment}.

\begin{figure}
  \footnotesize%
  \centering%
  \includegraphics{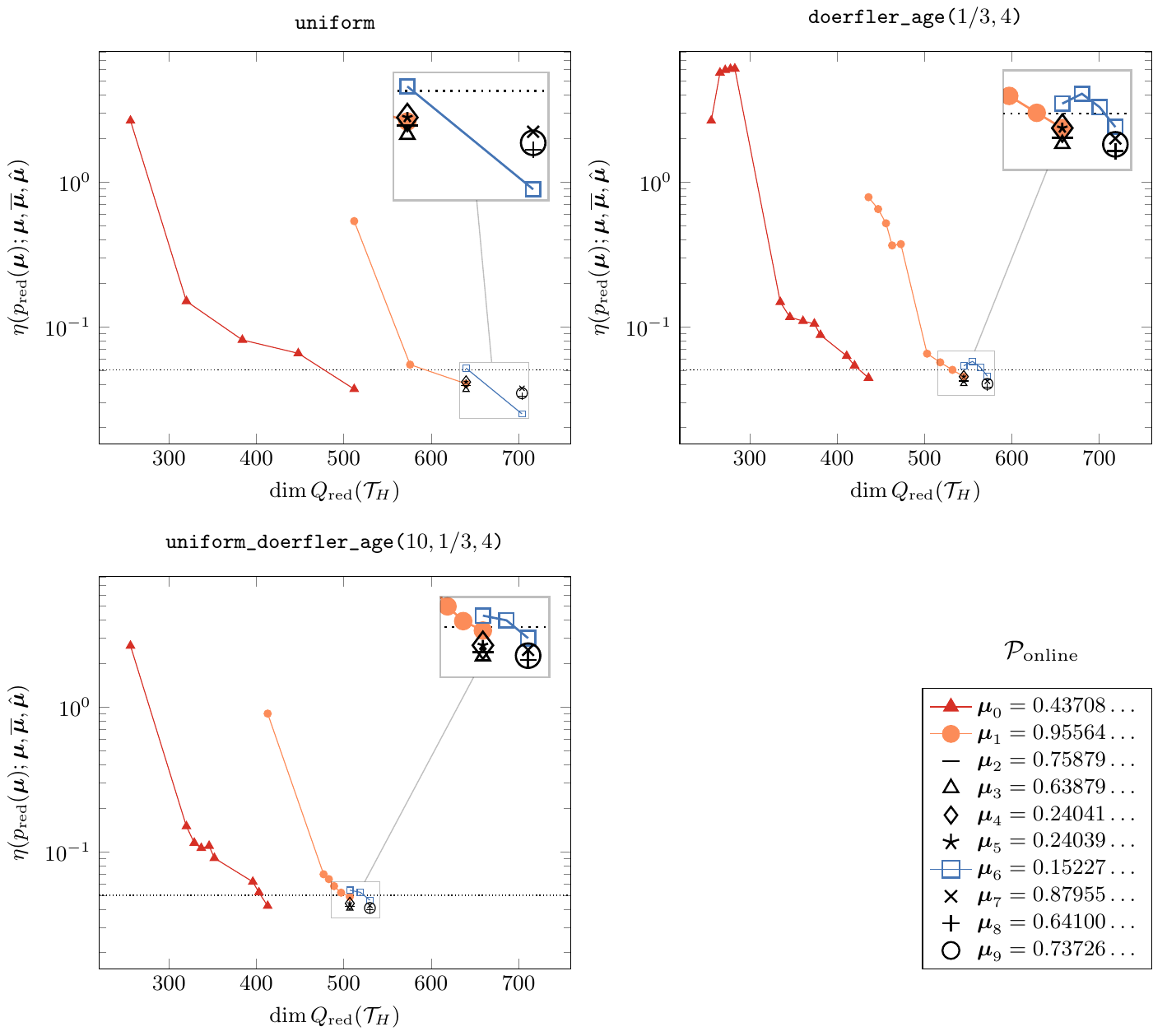}
  \caption{%
    Estimated error evolution during the adaptive on-line phase for the academic example in Section \ref{subsection::experiments_enrichment} with $|\Triangulation| = 64$, $N_\text{greedy} = 0$, $k_H = 1$, $\Delta_\text{online} = 5\cdot 10^{-2}$ (dotted line) and $\paramFixed = \paramHat = 0.1$ for several marking strategies: uniform marking of all subdomains (top left), combined D\"orfler marking with $\theta_\text{doerf} = 1/3$ and age-based marking with $N_\text{age} = 4$ (top right) and additional uniform marking while $\eta(p_\red(\param); \param, \paramFixed, \paramHat) > \theta_\text{uni} \Delta_\text{online}$ with $\theta_\text{uni} = 10$ (bottom left).
    With each strategy the local reduced bases are enriched according to Algorithm \ref{algorithm::online_enrichment} for each subsequently processed on-line parameter $\param_0, \dots, \param_9$ (bottom right).}
  \label{figure::experiments::enrchment::academic::error_evolution}
\end{figure}

We begin by choosing \texttt{MARK} such that $\tildeTriangulation = \Triangulation$ (all coarse elements are marked; denoted by \texttt{uniform} in the following).
For each parameter $\param \in \Params_\text{online}$ this results in a method that is similar to DD methods with overlapping subdomains.
In contrast to traditional DD methods, however, we start with an initial coarse basis and perform a reduced solve before each iteration which helps to spatially spread information.
As we observe in Figure \ref{figure::experiments::enrchment::academic::error_evolution}, top left, it takes four enrichment steps to lower the estimated error for the first on-line parameter $\param_0$ below the desired tolerance and another two enrichment steps for the next on-line parameter $\param_1$ (which is $\max_{\param \in \Params_\text{online}} \param$).
With uniform marking this increases the local basis sizes from four to ten on each coarse element.
The resulting coarse reduced space of dimension 640 is then sufficient to solve for the next four on-line parameters $\param_2, \dots, \param_5$ without enrichment.
One additional enrichment phase is needed for $\param_6$ (which is $\min_{\param \in \Params_\text{online}} \param$) and none for the remaining three on-line parameters.
Note that while this uniform marking strategy may be optimal in the number of enrichment steps it takes to reach the desired error for all on-line parameters it also leads to an unnecessarily high-dimensional coarse reduced space (of $\dim Q_\red(\Triangulation) = 704$) and a high work-load in each enrichment step.

Another popular choice in the context of adaptive mesh refinement is a D\"orfler marking strategy (see \cite{MNS2002} and the references therein), where we collect those coarse elements that contribute most to $\theta_\text{doerf} \sum_{\Element \in \Triangulation} \eta^\Element(\cdot; \param, \paramFixed, \paramHat)^2$ in $\tildeTriangulation \subseteq \Triangulation$, for a given user-dependent parameter $0 < \theta_\text{doerf} \leq 1$.
In addition, similar to \cite{BDD2004,HDO2011},  we count how often each $\Element \in \Triangulation$ was not marked and mark those elements the ``age'' of which is larger than a prescribed $N_\text{age} \in \N$ (resetting the age count of each selected element).
We denote this marking strategy by \texttt{doerfler\_age(}$\theta_\text{doerf}, N_\text{age}$\texttt{)}.
We found that a combination of $\theta_\text{doerf} = 1/3$ and $N_\text{age} = 4$ yielded the smallest overall basis size (of 572), compared to other combinations of $\theta_\text{doerf}$ and $N_\text{age}$ and the \texttt{uniform} marking strategy.
The number of elements marked per step range between five and 52 (over all on-line parameters and all enrichment steps; 23 steps in total) with a mean of 14 and a median of ten.
Of these marked elements between one and 44 have been marked due to their age in 12 of these 23 steps (with an average of 12 and a median of eight, taken over only those 12 steps where elements have been marked due to their age).
We observe in Figure \ref{figure::experiments::enrchment::academic::error_evolution}, top right, that the general behavior of the method with this marking strategy is similar to the one with \texttt{uniform} marking, with some commonalities and differences worth noting.
First of all it also takes three enrichment phases to reach the prescribed error tolerance, and for the same parameters $\param_0$, $\param_1$ and $\param_6$ as above.
But each of these enrichment phases naturally need more steps and large improvements can usually be observed after a lot of elements have been marked due to their age count (see for instance the fifth enrichment step for $\param_0$ or $\param_1$).
In addition we observe that the estimated error for a parameter sometimes increases, in particular in the very beginning (see the first four steps for $\param_0$, the fourth step for $\param_1$ or the first step for $\param_6$).
This is not troublesome since we can only expect a strict improvement in the energy norm induced by the bilinear form that is used in the enrichment.
This shows, never the less, that there is still room for improvement, although using the \texttt{doerfler\_age} marking we could reach a significantly lower overall basis size than using the \texttt{uniform} marking (572 vs. 704).

We propose a combination of the two strategies, namely a uniform marking while the estimated error is far away from the desired tolerance, i.e., $\eta(p_\red(\param); \param, \paramFixed, \paramHat) > \theta_\text{uni} \, \Delta_\text{online}$ for some $\theta_\text{uni} > 0$, followed by a D\"orfler and age-based marking as detailed above.
We denote this marking strategy by \texttt{uniform\_doerfler\_age(}$\theta_\text{uni}, \theta_\text{doerf}, N_\text{age}$\texttt{)}.
As we observe in Figure \ref{figure::experiments::enrchment::academic::error_evolution}, bottom left, this marking strategy combines advantages of both previous approaches, recovering the rapid error decrease of the \texttt{uniform} marking strategy far away from the desired tolerance (see the first step for $\param_0$ and $\param_1$) while yielding the smallest overall basis size of 530 (using a factor of $\theta_\text{uni} = 10$) due to the \texttt{doerfler\_age} marking strategy.
The smoothness and symmetry of the problem is reflected in the spatial distribution of the final local basis sizes (see Figure \ref{figure::experiments::enrichment::academic::final_local_basis_sizes}).

\begin{SCfigure}[50]
  \footnotesize%
  \includegraphics[height=28mm,trim=39 28 499 76, clip]{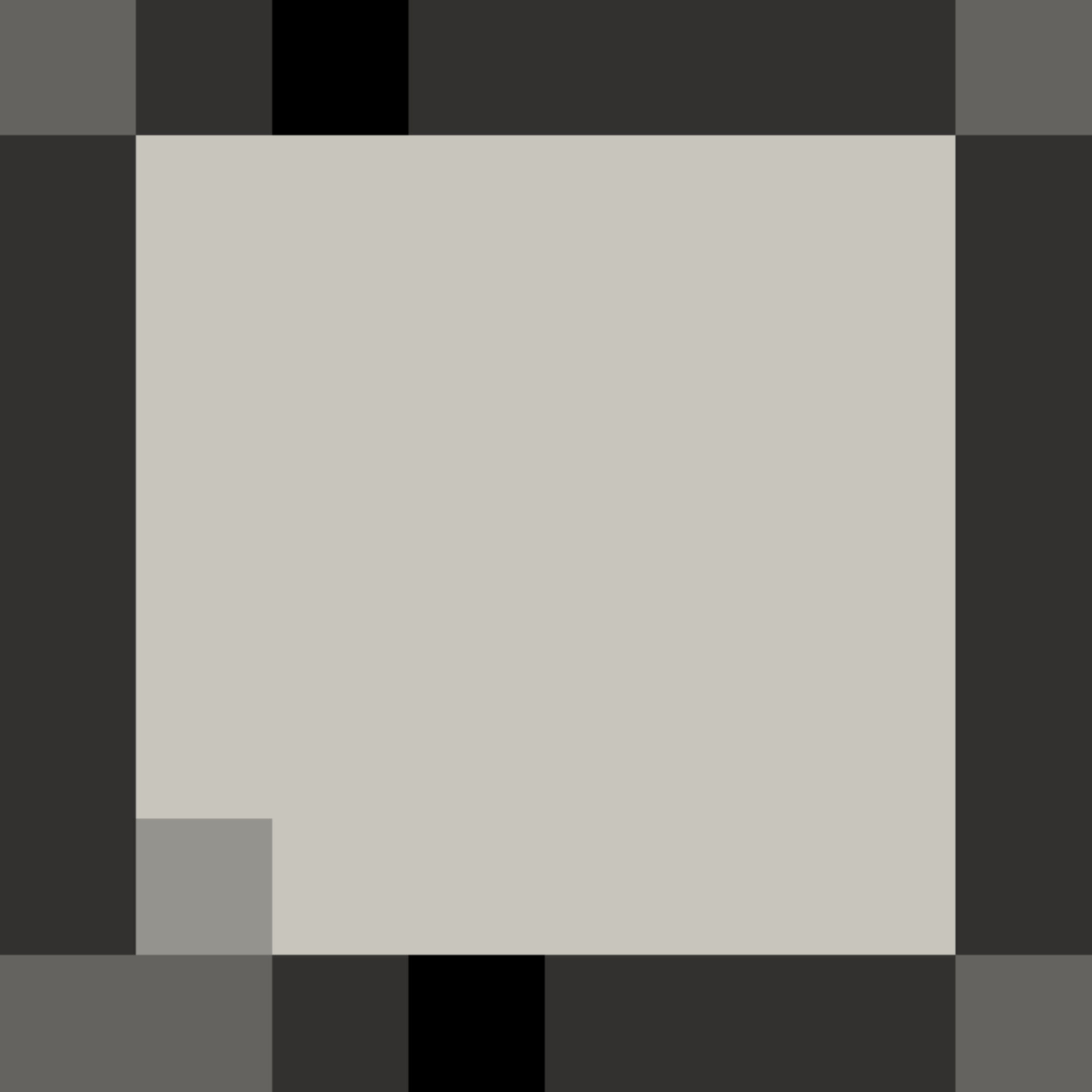}
  \caption{%
    Spatial distribution of the final sizes of the local reduced bases, $|\varPhi^\Element|$ (light: 7, dark: 11), for all $\Element \in \Triangulation$ after the adaptive on-line phase for the academic example in Section \ref{subsection::experiments_enrichment} with $\Omega = [-1, 1]^2$, $|\Triangulation| = 8 \times 8$ and the \textnormal{\texttt{uniform\_doerfler\_age(}$10, 1/3, 4$\texttt{)}} marking strategy (see Figure \ref{figure::experiments::enrchment::academic::error_evolution}, bottom left).}
  \label{figure::experiments::enrichment::academic::final_local_basis_sizes}
\end{SCfigure}

\textbf{Multi-scale example.} We again consider the multi-scale example detailed in \S \ref{subsection::experiments_convergence} on fixed triangulations with $|\triangulation| = 1,\hnS014,\hnS000$ and $|\Triangulation| = 25 \times 5$ and choose the set of on-line parameters $\Params_\text{online}$ to consist of the same 10 randomly chosen parameters $\param_0, \dots, \param_9 \in \Params$ as in the previous example.
In this setup $\max_{\param \in \Params_\text{online}} \eta(p_h(\param); \param, \paramFixed, \paramHat) = 2.66$ and we choose $\Delta_\text{online} = 2$ in Algorithm \ref{algorithm::online_enrichment}.
\begin{figure}
  \centering%
  \footnotesize%
  \includegraphics{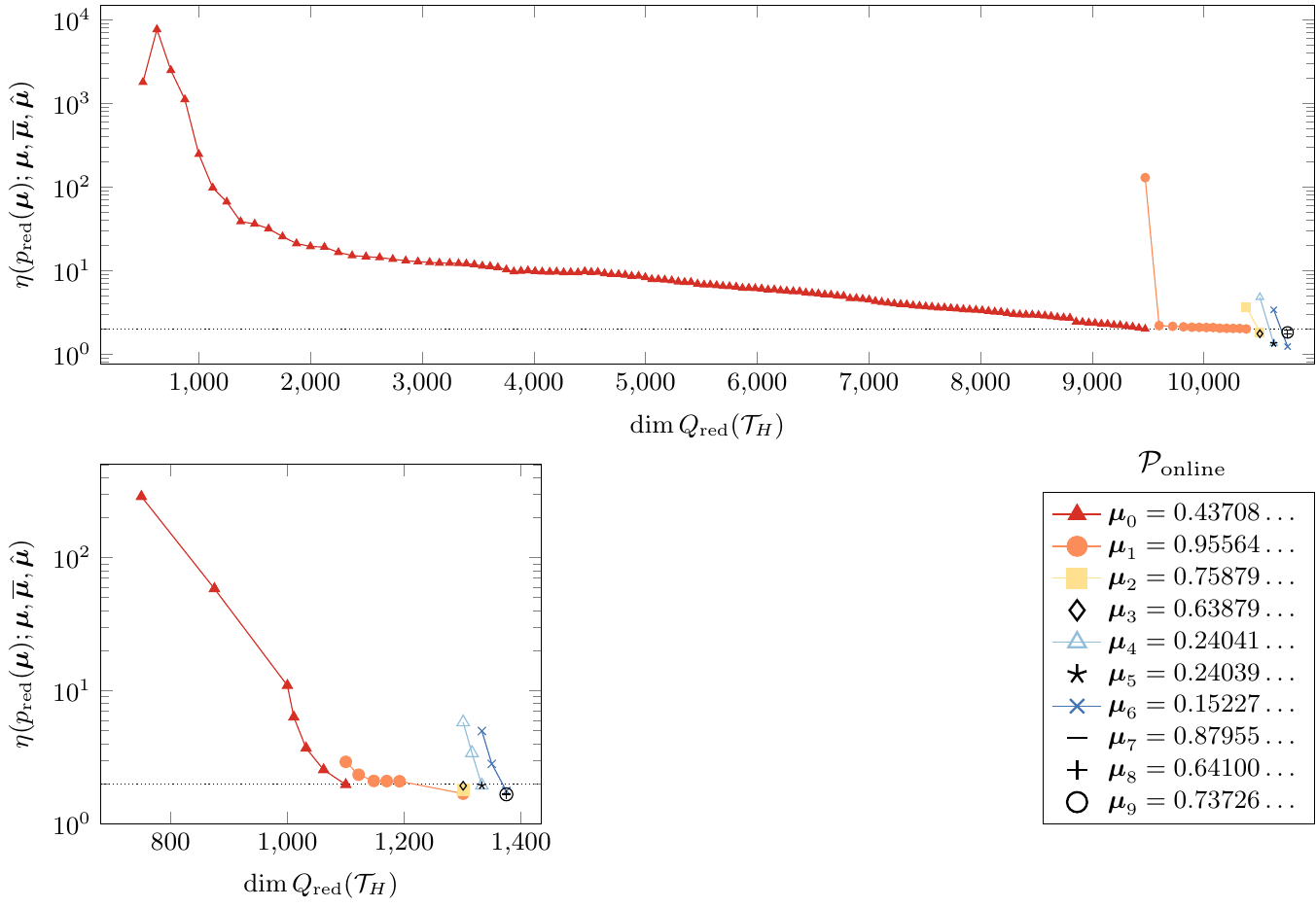}
  \caption{%
    Estimated error evolution during the adaptive on-line phase for the multi-scale example in \S \ref{subsection::experiments_enrichment} with $|\Triangulation| = 125$, $k_H = 1$, $\Delta_\text{online} = 2$ (dotted line), $\paramFixed = \paramHat = 0.1$ for different on-line and off-line strategies:
    no global snapshot (Greedy search disabled, $N_\text{greedy} = 0$) during the off-line phase, uniform marking during the on-line phase (top) and two global snapshots (Greedy search on $\Params_\text{train} = \{0.1, 1\}$, $N_\text{greedy} = 2$) and combined uniform marking while $\eta(p_\red(\param); \param, \paramFixed, \paramHat) > \theta_\text{uni} \Delta_\text{online}$ with $\theta_\text{uni} = 10$, D\"orfler marking with $\theta_\text{doerf} = 0.85$ and age-based marking with $N_\text{age} = 4$ (bottom left); note the different scales.
    With each strategy the local reduced bases are enriched according to Algorithm \ref{algorithm::online_enrichment} while subsequently processing the on-line parameters $\param_0, \dots, \param_9$ (bottom right).}
  \label{figure::experiments::enrichment::spe10::error_evolution}
\end{figure}

We first set $N_\text{greedy} = 0$ and $k_H = 1$ (thus disabling any Greedy search in the off-line phase and initializing the local bases with the coarse DG basis of order up to one); in the on-line phase we use the \texttt{uniform} marking strategy (see above).
As we observe in Figure \ref{figure::experiments::enrichment::spe10::error_evolution}, top, it takes 129 enrichment steps to lower the estimated error below the desired tolerance for the first on-line parameter $\param_0$.
This is not surprising since the data functions exhibit strong multi-scale features and non-local high-conductivity channels connecting domain boundaries (see Figure \ref{figure::experiments::estimator::spe10::solutions}).
After this extensive enrichment it takes 12 steps for $\param_1$ and none or one enrichment steps to reach the desired tolerance for the other on-line parameters.
The resulting coarse reduced space is of size $10,\hnS749$ (with an average of 86 basis functions per subdomain), which is clearly not optimal.
Although each subdomain was marked for enrichment, the sizes of the final local reduced bases may differ since the local Gram Schmidt basis extension may reject updates (if the added basis function is locally not linearly independent).
As we observe in Figure \ref{figure::experiments::enrichment::spe10::final_local_basis_sizes}, left, this is indeed the case with local basis sizes ranging between 24 and 148.

To remedy the situation we allow for two global snapshots during the off-line phase (setting $N_\text{greedy} = 2$, $\Params_\text{train} = \{0.1, 1\}$) and use the adaptive \texttt{uniform\_doerfler\_age} marking strategy (see above) in the on-line phase.
With two global solution snapshots incorporated in the basis the situation improves significantly, as we observe in Figure \ref{figure::experiments::enrichment::spe10::error_evolution}, bottom left, and there is no qualitative difference of the evolution of the estimated error during the adaptive on-line phase between the academic example studied above and this highly heterogeneous multi-scale example (compare Figure \ref{figure::experiments::enrchment::academic::error_evolution}, bottom left).
In total we observe only two enrichment steps with uniform marking (see the first two step for $\param_0$).
The number of elements marked range between 11 and 110 (over all on-line parameters and all but the first two enrichment steps) with a mean of 29 and a median of 22.
Of these marked elements only once have 87 out of 110 elements been marked due to their age (see the last step for $\param_1$).
Overall we could reach a significantly lower overall basis size than in the previous setup ($1,\hnS375$ vs. $10,\hnS749$) and the sizes of the final local bases range between only nine and 2 (compared to 24 to 148 above).
We also observe in Figure \ref{figure::experiments::enrichment::spe10::final_local_basis_sizes}, right, that the spatial distribution of the basis sizes follows the spatial structure of the data functions (compare Figures \ref{figure::experiments::estimator::spe10::datafunctions}, \ref{figure::experiments::estimator::spe10::solutions}), which nicely shows the localization qualities of out error estimator.
\begin{figure}
  \footnotesize%
  \centering%
  \includegraphics[width=\textwidth]{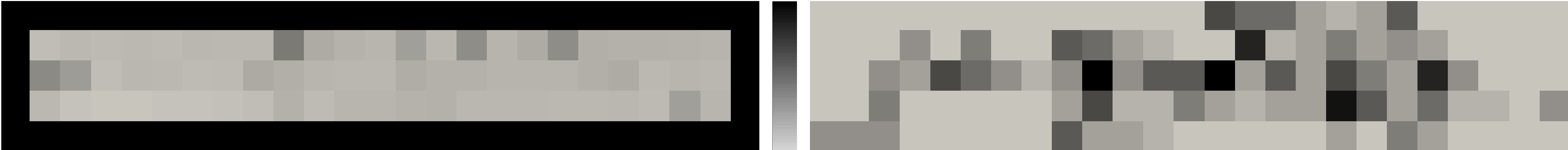}
  \caption{%
    Spatial distribution of the final sizes of the local reduced bases, $|\varPhi^\Element|$ for all $\Element \in \Triangulation$, after the adaptive online phase for the multi-scale example in \S \ref{subsection::experiments_enrichment} with $\Omega = [0, 5]\times[0, 1]$, $|\Triangulation| = 25 \times 5$ for the two strategies shown in Figure \ref{figure::experiments::enrichment::spe10::error_evolution}: no global snapshot with uniform enrichment (left, light: 24, dark: 148) and two global snapshots with adaptive enrichment (right, light: 9, dark: 20).
    Note the pronounced structure (right) reflecting the spatial structure of the data functions (compare Figures \ref{figure::experiments::estimator::spe10::datafunctions} and \ref{figure::experiments::estimator::spe10::solutions}).}
  \label{figure::experiments::enrichment::spe10::final_local_basis_sizes}
\end{figure}

\section{Conclusion}

In this contribution we equipped the localized Reduced Basis multi-scale method with a rigorous and efficient localized a-posteriori error estimator and a new adaptive on-line enrichment procedure.
The LRBMS method was originally introduced in \cite{KOH2011,AHKO2012} with a standard residual based a-posteriori error estimator for the reduced solution against the high-dimensional solution that could not give local error information and was computationally very costly during the off-line phase of the computation.
In addition the method was restricted to the SWIPDG discretization.
We extended the original LRBMS method to locally allow for arbitrary discretizations of at least first order within each subdomain.
In addition, we proposed a new error estimator for the high-dimensional as well as the reduced solution against the weak solution in the spirit of \cite{ESV2007,ESV2010}, based on a local conservative flux reconstruction.
The proposed estimator yields a guaranteed upper bound (involving no unknown constants), it is locally efficient and can be computed using local information only.
The estimator is efficiently off-line/on-line decomposable and allows to estimate the error in parameter dependent norms.

Using the local information of the estimator we proposed a new adaptive on-line enrichment strategy, where we extend the local reduced bases by solutions of local corrector problems posed on overlapping subdomains.
This is done during the on-line phase, thus deviating from the strict off-line/on-line separation of traditional RB methods, but only on those subdomains that have been selected by the estimator.
This strategy allows to guarantee the quality of the reduced solution during the on-line phase, even if an insufficient reduced basis had been prepared in the off-line phase (in contrast to traditional RB methods where the reduced basis is fixed for the whole on-line phase).

We provide numerical experiments to demonstrate the performance of the proposed estimator in the parametric setting for the high-dimensional discretization for an academic as well as a highly heterogeneous multi-scale example, where the estimator proves to be very efficient (see Section \ref{subsection::experiments_convergence}).
We also demonstrate the performance of the newly proposed adaptive on-line enrichment strategy for both examples.
It is particular noteworthy that only very few global solution snapshots were needed during the off-line phase for the multi-scale example to sufficiently prepare a reduced basis that was then adaptively enriched during the off-line phase (see Section \ref{subsection::experiments_enrichment}).
For the academic example of smooth data functions without any multi-scale features no global solution snapshots were required at all.

First findings regarding the new estimator have been published in \cite{OS2014} while first steps in the direction of on-line enrichment have been discussed in \cite{AO2013}.

\end{document}